\definecolor{violet}{rgb}{0.0,0.2,0.7}
\definecolor{rouge2}{rgb}{0.8,0.0,0.2}
\theoremstyle{plain}    
\newtheorem{thm}{Theorem}[section]
\theoremstyle{plain} 
\newtheorem{bigthm}{Theorem}
\newtheorem{bigcoro}[bigthm]{Corollary}
 \numberwithin{equation}{section} 
 \numberwithin{figure}{section} 
 \newtheorem{cor}[thm]{Corollary} 
 \theoremstyle{plain}   
 \newtheorem{ass}[thm]{Assumption} 
 \theoremstyle{plain}  
 \newtheorem{prop}[thm]{Proposition} 
 \theoremstyle{plain}    
 \newtheorem{lem}[thm]{Lemma} 
 \theoremstyle{remark}
  \newtheorem{claim}[thm]{Claim} 
 \theoremstyle{remark}
 \newtheorem{rem}[thm]{Remark}
 \theoremstyle{definition}
\theoremstyle{plain}  
\theoremstyle{plain}
\theoremstyle{definition}
\newtheorem{defi}[thm]{Definition}
\newcommand{\C}{{\mathbb{C}}}
\newcommand{\N}{{\mathbb{N}}}
\newcommand{\Q}{{\mathbb{Q}}}
\newcommand{\R}{{\mathbb{R}}}
\newcommand{\bD}{{\mathbb{D}}}
\newcommand{\cG}{{\mathcal{G}}}
\newcommand{\cX}{{\mathcal{X}}}
\def\1{\mathbf{1}}
\newcommand{\wX}{\widehat{X}}
\newcommand{\om}{\omega}
\newcommand{\Xr}{X_{\rm reg}}
\newcommand{\ep}{\varepsilon}
\newcommand{\Ric}{\mathrm{Ric} \,}
\newcommand{\reg}{\mathrm{reg}}
\renewcommand{\ge}{\geqslant}
\renewcommand{\le}{\leqslant}
\newcommand{\tr}{\operatorname{tr}}
\newcommand{\supp}{\operatorname{supp}}
\newcommand{\PSH}{\operatorname{PSH}}
\newcommand{\HBC}{\operatorname{HBC}}
\title{A note on orbifold regularity of canonical metrics}
\date{\today}
\author{Henri Guenancia}
\address{Univ. Bordeaux, CNRS, Bordeaux INP, IMB, UMR 5251, F-33400 Talence, France}\email{henri.guenancia@math.cnrs.fr}
\author{Chung-Ming Pan}
\address{Centre interuniversitaire de recherches en géométrie et topologie (CIRGET); Université du
Québec à Montréal; Case postale 8888, Succursale centre-ville, Montréal, Québec, H3C 3P8, Canada}
\email{pan.chung\_ming@uqam.ca}
\author{Mihai P\u{a}un}
\address{Universit\"at Bayreuth, Mathematisches Institut, Lehrstuhl Mathematik VIII, Universit\"atsstrasse 30, D-95447, Bayreuth, Germany}
\email{mihai.paun@uni-bayreuth.de}
\begin{document}
 
\begin{abstract}
In this short note, we prove that on a compact Kähler variety $X$ with log terminal singularities and $c_1(X)=0$, any singular Ricci-flat Kähler metric has orbifold singularities in restriction to the orbifold locus of $X$. 
\end{abstract}

\maketitle
 
 \tableofcontents
 \section{Introduction}

Let $(X, \omega_X)$ be a compact normal Kähler space with log terminal singularities. It was proved in \cite{EGZ09} that given any smooth hermitian metric $h$ on $K_X$, there exists a unique function $\varphi \in \mathrm{PSH}(X, \omega_X) \cap L^{\infty}(X)$ solving the complex Monge-Ampère equation
\begin{equation}
\label{MAI}
(\omega_X+dd^c \varphi)^n=\mu_h, \quad \sup_X \varphi=0
\end{equation}
where $\mu_h$ is the normalized canonical measure associated with $h$, cf \eqref{mu h}. Moreover, the closed positive $(1,1)$-current $\omega:=\omega_X+dd^c \varphi$ is a genuine Kähler metric in restriction to the regular locus $X_{\rm reg}$ of $X$. \\

In general, the behavior of $\omega$ near $X_{\rm sing}$ is quite mysterious, with the notable exception of when the singularities of $X$ are only ordinary double points \cite{HS}, cf also \cite{CS} for further results. A much more amenable problem is to consider the behavior of $\omega$ near finite quotient singularities (also called orbifold singularities), i.e. near points $x\in X_{\rm sing}$ where $(X,x)$ is locally analytically isomorphic to a finite quotient of $\mathbb C^n$. Let us define $X^{\rm orb}$ to be the maximal euclidean open subset of $X$ with at most quotient singularities. It is expected that $\omega|_{X^{\rm orb}}$ has orbifold singularities, i.e. that the pull back of $\omega$ to any smooth local cover is a Kähler metric. The problem is however not as simple as it sounds because of the global nature of $\omega$. Here is what is known so far: 
\begin{enumerate}[label= $\bullet$]
\item If $X$ has only orbifold singularities, then $\omega$ has orbifold singularities. This is a pretty straightforward adaptation of the smooth case, cf e.g. \cite{Camp04, Faulk19}. 
\item If $x\in X^{\rm orb}$ is an isolated singularity, then $\omega$ has orbifold singularities near $x$. This follows essentially from \cite{EGZ09}. 
\item  When $X$ is {\it projective}, then $\omega|_{X^{\rm orb}}$ has orbifold singularities. It was proved in \cite{LiTian19} relying on a construction of C. Xu. An alternative proof was later provided in \cite{GP24}, cf Remarks~\ref{rem orbi 1} and \ref{rem orbi 2}. 
\end{enumerate}

\medskip

The goal of this note is to give a "quantitative" version of $\omega$ having orbifold singularities (cf Proposition~\ref{quanorbi}) so as to get the following. 

\begin{bigthm}
\label{thma}
Let $(X,\omega_X)$ be a compact Kähler variety with log terminal singularities, let $h$ be a smooth hermitian metric on $K_X$ and let $\omega=\omega_X+dd^c \varphi $ be the solution of \eqref{MAI}. 
Assume that $X$ admits a locally trivial algebraic approximation. Then $\omega|_{X^{\rm orb}}$ has orbifold singularities. 
\end{bigthm}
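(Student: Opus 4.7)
My plan is to reduce to the projective case via the locally trivial algebraic approximation, and to pass to the limit using the uniform control provided by the quantitative Proposition~\ref{quanorbi}. First, I would fix the approximation $\pi\colon \cX \to \bD$ with central fiber $\cX_0 = X$ and general fiber $X_t := \cX_t$ projective. Picking a point $x \in X^{\rm orb}$ together with a local orbifold chart $p\colon \widetilde V \to V$ with finite Galois group $G$, local triviality of $\pi$ gives a biholomorphism $\mathcal{U} \simeq V \times \bD$ over $\bD$ on some open neighborhood $\mathcal{U}$ of $x$ in $\cX$. Hence the chart extends $G$-equivariantly to $\widetilde V \times \bD \to V \times \bD$, providing a common local orbifold cover of every nearby fiber, and in particular placing $x_t := (x,t)$ inside $X_t^{\rm orb}$ for each $t$.

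Next, I would choose a smooth family of reference Kähler forms $\omega_{X_t}$ and hermitian metrics $h_t$ on $K_{X_t}$ varying smoothly in $t$ (possible after shrinking $\bD$, again by local triviality), and for each $t$ solve \eqref{MAI} on $X_t$ to obtain $\varphi_t$ and $\omega_t := \omega_{X_t} + dd^c \varphi_t$. Since $X_t$ is projective for $t \neq 0$, the third bullet of the introduction ensures that $\omega_t|_{X_t^{\rm orb}}$ has orbifold singularities, and Proposition~\ref{quanorbi} then furnishes a quantitative regularity estimate for $p^*\varphi_t$ on $\widetilde V \times \{t\}$ (say, a $C^0$ or $C^{1,\bar 1}$ bound, according to its precise form). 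The crucial feature is that along a locally trivial family the data entering this estimate --- local bounds on $\omega_{X_t}$ and $h_t$, the geometry of $\widetilde V$, bounds on the density of $\mu_{h_t}$ and its $L^p$-norms, etc. --- remains uniform in $t$ near $0$.

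Letting $t \to 0$, standard stability of the complex Monge--Ampère equation (in the spirit of \cite{EGZ09} applied along the locally trivial family) yields $\varphi_t \to \varphi$ in $L^1$ globally and uniformly on compacta of $X^{\rm reg}$; combining this with the uniform estimates on $p^*\varphi_t$ and Arzelà--Ascoli, the limit $p^*\varphi$ inherits the same local regularity on $\widetilde V$, so that $p^*\omega$ is a bona fide Kähler form there. Since $x \in X^{\rm orb}$ was arbitrary, this yields the conclusion.

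The hardest part will be Proposition~\ref{quanorbi} itself: one must extract from the arguments of \cite{LiTian19, GP24} an orbifold regularity statement whose constants depend only on data that remains controlled along a locally trivial deformation (the germ geometry of the orbifold chart, the restrictions of $\omega_{X_t}$ and $h_t$, and $L^p$-norms of the measure), rather than on the ambient projective geometry of each $X_t$. Once this uniform quantitative version is in place, the limiting argument sketched above is fairly routine.
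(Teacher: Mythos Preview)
Your proposal is correct and matches the paper's approach (Theorem~\ref{appl1}) essentially point for point: solve \eqref{MA} on each fiber, invoke the projective case on the approximating fibers, feed the resulting qualitative orbifold regularity into Proposition~\ref{quanorbi} to get a uniform Laplacian bound, and pass to the limit. Two small refinements worth noting: the paper is careful to enumerate \emph{which} quantities must be uniform (the curvature bounds $A,B$, the potential bound $C$ via \cite{DGG23} on a simultaneous resolution, the cutoff constants $D,E$, and the Green kernel data $\gamma,G$ via the family version of \cite{GPSS22} in \cite{GuedjTo}), and the final upgrade from the limiting Laplacian inequality \eqref{laplacian} to smoothness uses Evans--Krylov in the form of \cite{WangY} rather than Arzel\`a--Ascoli.
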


We refer to Section~\ref{sec appli} for the precise meaning of the assumption on $X$. \\

When $X$ has trivial first Chern class, then the solution of \eqref{MAI} with respect to a flat hermitian metric on $K_X$ yields the so-called singular Ricci flat metric $\omega\in [\omega_X]$; the latter satisfies $\Ric \omega =0$ on $X_{\rm reg}$ in the usual sense. Now it was proved in \cite{BGL} that such varieties $X$ do admit locally trivial algebraic approximations. Given \cite{LiTian19} and Theorem~\ref{thma} one immediately obtains

\begin{bigcoro}\label{bigcoro:orb_CY}
Let $(X,\omega_X)$ be a compact Kähler variety with log terminal singularities such that $c_1(K_X)=0$. Then the singular Kähler Ricci flat metric $\omega\in [\omega_X]$ has orbifold singularities in restriction to $X^{\rm orb}$. 
\end{bigcoro}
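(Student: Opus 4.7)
The plan is to recognize this corollary as a direct packaging of Theorem~\ref{thma} applied in the Calabi--Yau setting, where the extra hypothesis on locally trivial algebraic approximations is supplied by \cite{BGL}. I would proceed in three short steps.

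First, I would set up the Monge--Ampère equation in the Ricci-flat case. Since $X$ has log terminal singularities and $c_1(K_X)=0$, the paper's introduction already recalls that $K_X$ admits a flat hermitian metric $h$, for which the associated normalized canonical measure $\mu_h$ is (up to a multiplicative constant) a smooth positive volume form on $X$. The unique solution $\varphi\in\PSH(X,\omega_X)\cap L^\infty(X)$ of \eqref{MAI} provided by \cite{EGZ09} then produces the current $\omega=\omega_X+dd^c\varphi$, which is exactly the singular Kähler Ricci-flat metric in $[\omega_X]$: the identity $(\omega_X+dd^c\varphi)^n=\mu_h$ translates into $\Ric \omega=0$ on $\Xr$.

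Second, I would invoke the main result of \cite{BGL}, which asserts that any compact Kähler variety with log terminal singularities and trivial canonical class admits a locally trivial algebraic approximation in the precise sense used in Section~\ref{sec appli}. This supplies exactly the hypothesis required in Theorem~\ref{thma}, so applying that theorem immediately yields that $\omega|_{X^{\rm orb}}$ has orbifold singularities, which is the conclusion sought.

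The reference to \cite{LiTian19} enters implicitly through the mechanism of Theorem~\ref{thma} itself: the nearby fibers of a locally trivial algebraic approximation are projective log terminal Calabi--Yau varieties, on which \cite{LiTian19} guarantees orbifold regularity of the Ricci-flat metrics; the quantitative version of orbifold regularity formalized in Proposition~\ref{quanorbi} is then what transports this property to the central fiber $X$. The main obstacle here is essentially absent---all the difficult analytic and algebraic work is hidden in Theorem~\ref{thma} and in \cite{BGL}---so the only genuine verification is that \eqref{MAI} specializes to the Ricci-flat equation when $h$ is flat, which is standard.
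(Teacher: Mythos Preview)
Your proposal is correct and follows exactly the paper's own argument: invoke \cite{BGL} to obtain a locally trivial algebraic approximation of $X$, and then apply Theorem~\ref{thma} (equivalently, Theorem~\ref{appl1}). One small inaccuracy worth noting is that the nearby projective fibers in the BGL deformation need not themselves be Calabi--Yau, but this is irrelevant since Theorem~\ref{thma} only requires solving \eqref{MAI} fiberwise for a smooth metric on the relative canonical bundle, not that the fibers have trivial canonical class.
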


\begin{rem}
If $X$ is a log terminal compact Kähler variety such that $K_X$ is ample (resp. $-K_X$ is ample) and is endowed with a singular Kähler-Einstein metric $\omega$, i.e. $\Ric \om = - \om$ (resp. $\Ric \om= \om$), then since $X$ is projective  the arguments in \cite{LiTian19} and \cite{GP24} carry over to show that $\om|_{X^{\rm orb}}$ has orbifold singularities. Together with the corollary above, this shows that {\it any} singular Kähler-Einstein metric on a log terminal compact Kähler variety $X$ has orbifold singularities in restriction to $X^{\rm orb}$.  
\end{rem}

\begin{rem}
Corollary~\ref{bigcoro:orb_CY} implies that the metric space associated with $(X^{\rm orb}, \omega)$ is locally bi-Hölder to $(X^{\rm orb}, \omega_X)$, cf Proposition~\ref{prop:orb_metric_completion} and Corollary~\ref{completion}.
\end{rem}
\bigskip

{\bf Strategy of the proof.}
The proof goes roughly as follows, cf~Theorem~\ref{appl1} for details. Let $\cX\to \mathbb D$ a locally trivial deformation of $X$ such that the fibers $X_{t_k}$ ($t_k\to 0$ as $k\to +\infty$) are projective. We consider the solutions $\omega_t$ of the equation \eqref{MAI} on $X_t$. Given $x\in X^{\rm orb}$ and a small neighborhood $U\subset \mathcal X^{\rm orb}$ of $x$, we pick an orbifold Kähler metric $\omega_{\rm orb}$ on $U$. The name of the game is to uniformly estimate the function 
\[f_t:=\mathrm{tr}_{\omega_t}({\omega_{\rm orb}}|_{U_t})\]
on $U_t$ for $t=t_k$ as $k\to+\infty$. The key steps are the following: 
\begin{enumerate}
\item Small powers $\alpha$ of $f_t$ satisfies an elliptic inequality $\Delta_{\omega_t} f_t^\alpha \ge -C_t f_t^{\alpha}$. 
\item $C_t$ can be chosen independently of $t$. This relies on the recent results \cite{G+} yielding strict positivity for solutions of MA equations like \eqref{MAI}, cf Lemma~\ref{quanstrikt} for the quantitative version. In order to go further one needs to integrate the elliptic inequality, which requires a cutoff function $\chi$ with support in $U$ as well as a family of cutoff functions $\rho_\ep$ for $U_{\rm sing}\subset U$. 
\item One can find $\chi$ such that $|d \chi|_{\omega_t} \le C$ using the arguments from the previous step. Next, since $\omega_t$ has orbifold singularities (at this point only in a {\it qualitative} way), one can arrange that $\int_{U_t} |d\rho_\ep|^4 \omega_t^n \to 0$ as $\ep \to 0$. 
\item One can then appeal to the Harnack type inequality from \cite{GP24} (itself building upon the results of \cite{GPSS22}) to derive from the easy $L^1$ bound for $f_t$ an actual $L^{\infty}$ bound. This is the object of Proposition~\ref{quanorbi}. 
\item Once a uniform bound is obtained for $\sup_{U_{t_k}} f_{t_k}$, we rely on well-established pluripotential theoretic arguments to show that $\sup_{U_0} f_0<\infty$, which essentially concludes the proof. 
\end{enumerate}

\bigskip

{\bf Acknowledgements.} H.G. is partially supported by the French Agence Nationale de la Recherche (ANR) under reference ANR-21-CE40-0010 (KARMAPOLIS). 
C.-M.P. is supported by postdoc funding from CIRGET, UQAM. 
M.P. is gratefully acknowledging the support from the Deutsche Forschungsgemeinschaft (DFG).

 \section{Preliminaries} \label{prel}

Let $X$ be a compact normal Kähler variety. 

\begin{defi}[Kähler forms]
\label{def1}
A Kähler form on $X$ is a Kähler form $\omega_X$ on $\Xr$ such that $\omega_X$ extends to a Kähler form on $\mathbb C^N$ given any local embedding $X\underset{\rm loc}{\hookrightarrow} \mathbb C^N$. 
\end{defi}

Assume that the rank one reflexive sheaf $mK_X:=((\det \Omega_X^1)^{\otimes m})^{\star\star}$ is locally free for some integer $m\ge 1$. It makes sense to consider a smooth hermitian metric $h$ on $K_X$ as well as a local generator $\sigma$ of $mK_X$. Then  
\begin{equation}
\label{mu h}
\mu_h:=i^{n^2} \frac{(\sigma\wedge \bar \sigma)^{\frac 1m}}{|\sigma|^{2/m}_{h^{\otimes m}}}
\end{equation}
defines a positive measure on $\Xr$ which is independent of the choice of $m$ or $\sigma$. We extend it to $X$ trivially. We have $\mu_h(X)<+\infty$ if and only if $X$ has log terminal singularities. \\

\noindent {\bf Convention} From now on, we further assume that $X$ has log terminal singularities and dimension equal to $n$. 
\smallskip

\begin{defi}[Orbifold singularities]
Let $x\in X$; we say that the germ $(X,x)$ is a quotient (or orbifold) singularity if there exist a euclidean open neighborhood $U$ of $x$, and a finite quasi-\'etale Galois cover 
\begin{equation}\label{Galois cover}
	p: V \to U
\end{equation} 
such that $V$ is an open subset of $\mathbb{C}^n$. 
\end{defi}
In the sequel, for simplicity, we may assume that $V$ is the unit ball $B_{\C^n}(0,1)$ in $\C^n$ and set $U_r := p(B_{\C^n}(0,r))$ for each $r \in (0,1]$. 

\begin{defi}[Orbifold locus]
We define the orbifold locus of $X$ by
\[
	X^{\rm orb} := \{ x \in X \mid (X,x) \text{ is at most quotient}\},
\]
which is a euclidean open set of $X$.
\end{defi}
According to \cite[Proposition 9.3]{GKKP} and \cite[Lemma 5.8]{GK20}, if $X$ is log terminal, then $X$ is quotient in codimension two; namely, the non-orbifold locus $X \setminus X^{\rm orb}$ is contained in a proper analytic subset of codimension at least three. 
Moreover, when $X$ is projective or admits a locally trivial algebraic approximation, it is known that $X \setminus X^{\rm orb}$ is indeed a closed analytic subset (see \cite[Lemma~35]{CGG2}).

\smallskip

The Monge-Ampère equation plays a key role in the study of complex Kähler spaces. Its importance stems from the fact that it offers a quantitative counterpart to numerical properties of the canonical class of a Kähler space. That is to say, one can construct metrics whose Ricci curvature reflects the natural properties arising from the context of algebraic geometry. 

\smallskip
Given a Kähler form $\omega_X$, we  rescale $h$ so that $\mu_h(X)=\int_X\omega_X^n$. We consider the Monge-Ampère equation
\begin{equation} 
\tag{MA}
\label{MA}
(\omega_X+dd^c \varphi)^n=\mu_h,
\end{equation}
for $\varphi \in \PSH(X, \omega_X)\cap L^{\infty}(X)$ normalized so that $\sup_X \varphi=0$. 

\smallskip
It was proved in \cite{EGZ09} that \eqref{MA} admits a unique solution $\varphi$. Moreover, the positive current $\omega:=\omega_X+dd^c \varphi$ induces a Kähler metric on $\Xr$, satisfying
\begin{equation} 
\label{KE}
\Ric \omega=-\Theta_h(K_X).
\end{equation}
For an outline of the resolution of \eqref{MA} we refer to \cite[\textsection 2]{GP24}.
\medskip

\noindent In this context, the main question we are considering here is the following: \emph{has the restriction 
\[\omega|_{X^{\rm orb}}\]
orbifold singularities}, in the sense that $p^*\omega|_{U}$ is a Kähler metric on $V$ for any $U,V, p$ as in \eqref{Galois cover}? 

\begin{rem}
\label{rem orbi 1}
It is enough to check that $p^*\omega|_U$ is a Kähler metric for a collection of open sets $U$ as above covering $X^{\rm orb}$. Indeed, if we have two uniformizing charts $p_i: V_i\to U_i$ for $i=1,2$ such that $U_{12}:=U_1\cap U_2 \neq \emptyset$, then the "transition maps" $V_{1}'\times_{U_{12}} V_{2}'\to V_{i}'$ are quasi-étale, where $V_{i}' := p_i^{-1}(U_{12})$. 
Since the $V_i$'s are smooth, these transition maps are actually étale by Nagata purity theorem.  
\end{rem}

\begin{rem}
\label{rem orbi 2}
This is known to be the case if $X$ is projective, thanks to the work of Li and Tian, \cite{LiTian19}. One of the crucial ingredients in their proof is the existence of a \emph{partial desingularisation} of $X$. That is to say, they show that there exists a projective variety $\wX$ such that 
$\wX^{\rm orb}= \wX$ together with a map $\pi: \wX\to X$ biholomorphic above $X^{\rm orb}$. A similar, slightly weaker result was announced by Ou in 
\cite{Ou} for any normal complex space $X$. It is our deliberate choice to rely here exclusively on the published result in \cite{LiTian19}, even if in the end the theorems we are able to establish are less general than expected.
\end{rem}

\begin{rem}
A proof of Li-Tian's result which is independent of the existence of a partial resolution of a projective variety $X$ is given in \cite[\textsection~4]{GP24}. Unfortunately, the arguments provided in the aforementioned work are still too dependent on the fact that $X$ is projective, through the use of Artin's approximation theorem.
\end{rem}


\section{Strict positivity}\label{poz}

To begin with, we recall the following result, due to \cite[Theorem~1.2]{G+}, which is a vast generalization of \cite{GGZ2}.  
\begin{thm}\cite{G+}
\label{strikt}
Let $(X, \omega_X)$ be a compact normal Kähler space, endowed with a smooth Kähler metric $\omega_X$ (cf. Definition \ref{def1}). We consider 
$\omega:= \omega_X+ dd^cu$ a (closed) positive current on $X$ with the following properties.
\begin{enumerate}
\smallskip

\item[(1)] The potential $u$ is bounded and smooth at least on $\Xr\setminus D$, where $D$ is a divisor on $X$.
\smallskip

\item[(2)] We have $\omega^n= \exp(F)\omega_X^n$ on $\Xr\setminus D$, such that $F\in L^1(X, \omega_X)$ and $\exp(F)\in L^p(X, \omega_X)$ for some $p> 1$.
\smallskip

\item[(3)] $\Ric \omega\geq -A(\omega+ \omega_X)$ in the sense of currents on $\Xr
$, where $A> 0$ is a positive constant.
\end{enumerate}
Then there exists a positive constant $C> 0$ such that $\omega\geq C\omega_X$. Moreover, $C$ only depends on $(X, \omega_X), A$ 
and the $L^p$ norm of $\exp(F)$. 
\end{thm}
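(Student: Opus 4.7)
The plan is to prove the equivalent statement that $H:=\tr_\om \om_X$ is uniformly bounded above on $\Xr\setminus D$; then $\om\ge C\,\om_X$ holds pointwise there, and extends across $D\cap \Xr$ by a standard pluripotential closedness argument (the mass that a bounded-potential form can charge on the proper analytic set $D$ is zero).

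The central tool is the Chern--Lu / Aubin--Yau inequality, valid on $\Xr\setminus D$ where both metrics are smooth Kähler. After contracting $\Ric\,\om\ge -A(\om+\om_X)$ into the standard formula and using that the bisectional curvature of $\om_X$ is bounded above by some $B<\infty$ in a fixed local embedding (finite by Definition~\ref{def1}), one arrives at
\[ \Delta_\om \log H \ \ge \ -C_1\,H - C_2,\]
where $C_1, C_2$ depend only on $A$, $B$ and $n$. Combined with $\Delta_\om u = n - H$, the auxiliary function $w := \log H - (C_1+1)u$ obeys the clean inequality
\[ \Delta_\om w \ \ge \ H - C_3 \quad \text{on } \Xr\setminus D,\]
which is exactly the shape amenable to $L^\infty$-estimates via integral methods.

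Since $\Xr\setminus D$ is not compact and $\om$ is not globally smooth, I would realize this by approximation: take a log resolution $\pi\colon \tX\to X$ of $\Xs\cup D$ and smooth Kähler forms $\om_\ep$ on $\tX$ coming from solutions of perturbed Monge--Amp\`ere equations with right-hand sides $e^{F_\ep}\,\pi^*\om_X^n$, where $F_\ep\to \pi^*F$ and $\|e^{F_\ep}\|_{L^p}$ stays bounded. Running Chern--Lu for $H_\ep:=\tr_{\om_\ep}\pi^*\om_X$ gives $\Delta_{\om_\ep} w_\ep \ge H_\ep - C_3$ globally on the compact manifold $\tX$. Then one invokes the Guo--Phong--Song--Sturm auxiliary Monge--Amp\`ere / ABP estimates of \cite{GPSS22}: fed the uniform $L^p$ bound on $e^{F_\ep}$, they upgrade this differential inequality into a uniform $L^\infty$ bound on $w_\ep$ (hence on $H_\ep$) depending only on $(X,\om_X)$, $A$ and $\|e^F\|_{L^p}$. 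Passing $\ep\to 0$ and descending to $X$ produces the desired sup bound for $H$ with a constant of the advertised shape.

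The hardest step, and the one that genuinely uses $e^F\in L^p$ with $p>1$, is the promotion from the differential inequality to an $\ep$-uniform sup bound. Classical Moser iteration would need Sobolev/Poincar\'e constants that degenerate for $\om_\ep$ as $\ep\to 0$; the Guo--Phong--Song--Sturm device of comparing $w_\ep$ to the potential of a comparison Monge--Amp\`ere equation with $L^p$ right-hand side bypasses this degeneration, and that is precisely the role played by hypothesis (2) in the statement. Tracking the constants carefully through this comparison is what delivers the quantitative ``$C$ depends only on $(X,\om_X)$, $A$ and $\|e^F\|_{L^p}$'' tail of the theorem.
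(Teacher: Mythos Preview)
Your overall architecture---pass to a resolution, approximate $\omega$ by smooth K\"ahler metrics, run Chern--Lu, and use an estimate from \cite{GPSS22} to close---matches the two-step sketch the paper gives for the proof in \cite{G+}. However, you have the emphasis inverted: the step you treat as routine is the one the paper singles out as the ``heart of the matter,'' and the step you flag as hardest is comparatively soft once the first is done.

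Concretely, you write that $\omega_\ep$ are ``solutions of perturbed Monge--Amp\`ere equations with right-hand sides $e^{F_\ep}\pi^*\omega_X^n$, where $F_\ep\to\pi^*F$ and $\|e^{F_\ep}\|_{L^p}$ stays bounded,'' and then immediately assert that Chern--Lu yields $\Delta_{\omega_\ep} w_\ep \ge H_\ep - C_3$ with $C_3$ independent of $\ep$. But hypothesis~(3) is a one-sided curvature bound on $\omega$ itself; translated through $\omega^n=e^F\omega_X^n$ it becomes a one-sided bound on $dd^cF$, and a generic regularization $F_\ep$ preserving only the $L^p$ norm of $e^{F_\ep}$ does \emph{not} preserve any such bound. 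Without a uniform lower bound on $\Ric\,\omega_\ep$ the constants in Chern--Lu blow up with $\ep$ and the scheme collapses. The paper states explicitly that the crux of \cite{G+} is precisely to construct an approximating family $(\omega_k)$ of K\"ahler metrics on the smooth model whose Ricci curvature has a uniform lower bound; this is a nontrivial regularization-of-currents argument that your proposal omits.

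By contrast, once that approximation is in hand, the promotion from the differential inequality to a sup bound is less delicate than you suggest: on a compact smooth manifold, with $\|u_\ep\|_{L^\infty}$ controlled by the $L^p$ hypothesis via Ko{\l}odziej-type estimates, the maximum principle applied to $w_\ep=\log H_\ep-(C_1+1)u_\ep$ already gives $H_\ep\le C_3$ at its maximum and hence a uniform bound everywhere. The paper's sketch instead cites the \emph{Green kernel} estimates of \cite{GPSS22} (not the auxiliary-MA/ABP device you invoke), presumably because the approximation in \cite{G+} is not of Monge--Amp\`ere type and no uniformly bounded potential is directly available; either way, this second step is a consequence, not the obstacle.
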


\noindent The proof of Theorem \ref{strikt} consists in two steps: the heart of the matter is to show that one can approximate the inverse image of the current $\omega$ on any smooth model of $X$ by a family $(\omega_k)_{k\geq 1}$ of Kähler metrics, in such a way that the negative part of 
$\Ric {\omega_k}$ has a uniform lower bound, i.e. independent of $k$. Then the result is a direct consequence of Chern-Lu inequality, combined with the Green kernel estimates in \cite{GPSS22}. 

\begin{rem}
Straightforward modifications of the arguments given in \cite{G+} show that one can derive the same conclusion as in Theorem \ref{strikt}
if the hypothesis (3) above is replaced by the more general 
\[\Ric\omega+ dd^c\psi \geq -A(\omega+ \omega_X) \]
in the sense of currents on $\Xr$, where $\psi$ is bounded, such that $dd^c\psi \geq - C\omega_X$ for some positive constant $C>0$.
\end{rem}
\medskip

\noindent As a consequence of Theorem \ref{strikt} we infer the following statement, which in fact represents a quantitative version of it.

\begin{lem}\label{quanstrikt} Let $(X, \omega_X)$ be a compact Kähler space with log terminal singularities. Let $h$ be a smooth metric on the $\Q$-line bundle $K_X$, and let $\varphi\in \PSH(X, \omega_X)\cap L^{\infty}(X)$ be the solution of the MA equation
\begin{equation} 
\label{MAF}
(\omega_X+dd^c \varphi)^n= \mu_h,
\end{equation}
normalized so that $\sup_X \varphi=0$. Here $h$ is normalized so that $\mu_h(X)=\int_X\omega_X^n$. Let $A, B$ and $C$ be three positive constants, such that we have:
\begin{itemize}

\item[\rm (1)] $i\Theta_h(K_X)\leq A\omega_X$ on $X$;

\item[\rm (2)] The holomorphic bisectional curvature $\HBC_{\omega_X}$ is bounded from above on $\Xr$ by $B$;

\item[\rm (3)] $\varphi\geq -C$.
\end{itemize}
Then there exists $\ep_0= \ep_0 (A, B, C,n)> 0$ such that $\omega\geq \ep_0 \omega_X$.
\end{lem}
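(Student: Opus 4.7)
The plan is to revisit the proof of Theorem~\ref{strikt} from \cite{G+} and to keep careful track of how the constant in the conclusion $\omega\ge C_0\,\omega_X$ depends on the various quantities at play. The conclusion of Theorem~\ref{strikt} already has the desired form; the content of the lemma is that the constant $C_0$ can be chosen to depend only on $A,B,C,n$, and on nothing else about $(X,\omega_X,h)$.

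The heart of the argument is a Chern--Lu type computation for the identity map $(\Xr,\omega)\to (\Xr,\omega_X)$. Since $\omega^n=\mu_h$ on $\Xr$, we have $\Ric\omega=i\Theta_h(K_X)$, which by hypothesis~(1) is controlled by $A\omega_X$. Combined with the upper bound $B$ on the holomorphic bisectional curvature of $\omega_X$ coming from hypothesis~(2), this yields an elliptic inequality of the form
$$\Delta_\omega \log \tr_\omega \omega_X \ \ge\ -K(A,B,n)\,\tr_\omega\omega_X$$
on $\Xr$. Applying the Aubin--Yau trick to the auxiliary function $v:=\log\tr_\omega\omega_X-(K+1)\varphi$ and using $\Delta_\omega\varphi=n-\tr_\omega\omega_X$, one obtains
$$\Delta_\omega v \ \ge\ \tr_\omega\omega_X-(K+1)n.$$
In combination with $\varphi\ge -C$ from hypothesis~(3) and the normalization $\sup_X\varphi=0$, a maximum principle argument produces a uniform upper bound $\tr_\omega\omega_X \le M(A,B,C,n)$. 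Since the maximal eigenvalue of $\omega_X$ with respect to $\omega$ is bounded above by $\tr_\omega\omega_X$, this is equivalent to the lower bound $\omega\ge M(A,B,C,n)^{-1}\omega_X$, so one takes $\ep_0:=M(A,B,C,n)^{-1}$.

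The delicate point, which is specific to the singular setting, is that the maximum principle cannot be invoked naively on $\Xr$ since the extrema of $v$ may fail to be attained. Following \cite{G+}, one passes to a resolution $\pi:\tX\to X$, approximates $\pi^*\omega$ by smooth Kähler metrics $\omega_k$ satisfying uniform Ricci lower bounds (with constants depending only on $A$ and $n$), and replaces the maximum principle by the Green function argument of \cite{GPSS22}. The main obstacle is therefore not the structure of the estimate but the bookkeeping: one has to verify that the Green function bounds and the entire approximation scheme yield constants depending only on the four stated parameters, and not on auxiliary data coming from the particular desingularization. This is essentially already contained qualitatively in \cite{G+}; the present lemma is the statement that, after careful inspection, the proof there is in fact quantitative in exactly this sense.
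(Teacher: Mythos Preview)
Your overall Chern--Lu computation and Aubin--Yau trick are exactly right, and they coincide with what the paper does on $\Xr$. The divergence is in how one justifies the maximum principle. You propose to rerun the entire approximation scheme of \cite{G+} on a resolution together with the Green function estimates of \cite{GPSS22}, and then assert that ``after careful inspection'' the resulting constants depend only on $A,B,C,n$. This last claim is the whole content of the lemma, and it is not substantiated: Theorem~\ref{strikt} explicitly allows the constant to depend on $(X,\omega_X)$ and on an $L^p$ bound for the density, and the Green function machinery on a resolution genuinely brings in data attached to that resolution (volumes, the auxiliary K\"ahler class, etc.), not just $A,B,C,n$. So as written your argument has a gap at precisely the quantitative step it is supposed to supply.

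The paper takes a different, simpler route that sidesteps this bookkeeping entirely. It invokes Theorem~\ref{strikt} only \emph{qualitatively}, to know that $\tr_\omega\omega_X$ is finite on $\Xr$. It then fixes a quasi-psh function $\psi$ with log poles along $X_{\rm sing}$, and for small $\delta>0$ considers $H_\delta=\delta\psi+\log\tr_\omega\omega_X$. Thanks to the qualitative finiteness, $H_\delta\to-\infty$ near $X_{\rm sing}$, so its maximum is attained at some $x_\delta\in\Xr$, where the ordinary maximum principle applies. Running your Chern--Lu/Aubin--Yau inequality for $H_\delta-(A+B+2)\varphi$ at $x_\delta$ gives the explicit bound $H_0(x_\delta)\le\log\big(n(A+B+2)\big)$, and since $\psi\le 0$ and $-C\le\varphi\le 0$, one deduces a bound on $H_\delta$ everywhere that depends only on $A,B,C,n$; letting $\delta\to 0$ finishes. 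In short: the paper uses \cite{G+} only to force the maximum into $\Xr$, and the quantitative constant then comes from a completely elementary maximum principle argument, with no need to control the approximation or Green function constants of \cite{G+,GPSS22}.
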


\begin{rem}
In the statement above, we do not require a $L^p$ bound ($p>1$) for the density of $\mu_h$ but rather we ask for an $L^{\infty}$ bound of the potential $\varphi$. 
\end{rem}

\begin{proof} The proof is an \emph{énième} consequence of the Chern-Lu inequality, as follows. In the first place, by MA theory we already know that 
$\omega= \omega_X+dd^c \varphi$ is non-singular on $\Xr$. Moreover, Theorem \ref{strikt} shows that we have
\[\sup_{\Xr}\tr_\omega \omega_X< \infty.\]
Let $\psi\in \PSH(X, \omega_X)$ be a function with log-poles on $X$, such that the polar locus $(\psi= -\infty)$ coincides with $X_{\rm sing}$, normalized by $\sup_X \psi =0$. For each parameter $\delta> 0$ we introduce the function
\[H_\delta:= \delta\psi+ \log\tr_\omega \omega_X\]
defined and smooth on $\Xr$. Thanks to Theorem~\ref{strikt}, we have $H_{\delta}\to -\infty$ near $X_{\rm sing}$ hence $H_\delta$ attains its maximum at a point $x_\delta \in X_{\rm reg}$. 

\noindent As a consequence of the Chern-Lu inequality we have
\begin{equation}\label{CL2}
	\Delta_\omega \log \tr_\omega \omega_X\geq \frac{\langle \Ric \omega, \omega_X\rangle_{\omega}}{\tr_\omega \omega_X}- B\tr_\omega \omega_X.
\end{equation}

Hypothesis (1) in our statement of Lemma \ref{quanstrikt} can be re-written as $\Ric \omega \geq -A\omega_X$. 
Note that for any two $(1,1)$-forms $\alpha \geq 0$ and $\beta \geq 0$, one has $\langle \alpha, \beta\rangle_\omega \leq (\tr_\omega \alpha)(\tr_\omega \beta)$. Therefore we obtain 
\begin{equation}\label{CL3}
\Delta_\omega \log \tr_\omega \omega_X
\geq -(A+ B)\tr_\omega \omega_X,
\end{equation}
pointwise on $\Xr$. This implies  the following inequality
\begin{equation}\label{CL4}
\Delta_\omega H_\delta
\geq -(A + B + \delta)\tr_\omega \omega_X
= -(A + B + \delta)\exp(H_0) 
\end{equation}
from which we infer that 
\begin{equation}\label{CL5}
\Delta_\omega \big(H_\delta- (A+B+ 2)\varphi\big)
\geq \exp(H_0) -n(A+B+2)
\end{equation}
holds, for any $0<\delta< 1$. By definition of $x_\delta$, we have 
\begin{equation}\label{CL6}
H_\delta(x)- (A+B+ 2)\varphi(x)\leq H_\delta(x_\delta)- (A+B+ 2)\varphi(x_\delta)
\end{equation}
for any $x\in \Xr$ as well as
\begin{equation}\label{CL7}
H_0(x_\delta)\leq \log\big(n(A+B+2)\big).
\end{equation}
as an application of \eqref{CL5} and the maximum principle. Since $\psi\le 0$ and $-C\le \varphi\le 0$ we infer from \eqref{CL6} and \eqref{CL7} that the inequality
\begin{equation}\label{CL8}
H_\delta(x)\leq \log\big(n(A+B+2)\big)+ C(A+B+2)
\end{equation}
holds for any $x\in \Xr$, and our statement is completely proved as $\delta\to 0$.
\end{proof}

\section{Orbifold singularities}\label{osing}

\noindent A glance at the equation \eqref{MAF} shows that the RHS becomes \emph{non-singular} on the Euclidean unit ball $V$ via the 
pull-back by the ramified cover $p: V\to U$, cf. \eqref{Galois cover} in Section \ref{prel}. One is therefore entitled to expect that the restriction of the solution $\omega$ of this equation to $X^{\rm orb}$ has orbifold singularities, that is to say 
\begin{equation}\label{O1}
p^\star \omega|_{U_{\rm reg}}
\end{equation}
extends to a smooth metric on $V$, for any local uniformisation $p$. 
\medskip

In this section, our goal is as follows. Assume that we already know that the pull-back \eqref{O1} has the property that 
\begin{equation}\label{O2}
\sup_{V_0}\tr_{p^\star \omega}\omega_{V}< \infty,
\end{equation}
where $V_0:= p^{-1}(U_{\rm reg})$ and $\om_V$ is the euclidean metric on $V$. We want to convert \eqref{O2} into a quantitative statement, i.e. to determine an explicit constant $C_U> 0$ such that 
\begin{equation}\label{O3}
\sup_{V_0}\tr_{p^\star \omega}\omega_{V}\leq C_U.
\end{equation}
This will be achieved in Proposition~\ref{quanorbi} below, which is based on the methods and results from \cite{GP24} and also relies on \cite{G+} via Lemma~\ref{quanstrikt}. 

\medskip

To start with, it is standard (e.g. using local embedding in euclidean space) to construct a cutoff function $\chi$ on $U$, such that 
\[\supp \chi \Subset U, \qquad \chi|_{U_{1/2}}\equiv 1, \quad \sup_U \frac{|\nabla \chi|^2_{\omega_X}}{\chi}<+\infty\]
and introduce the quantities
\begin{equation}\label{O4}
D:= \sup_{U}\left(\frac{|\nabla \chi|^2_{\omega_{X}}}{\chi}+ |dd^c\chi|_{\omega_{X}}\right) \quad \mbox{and} \quad E:=\sup_V \mathrm{tr}_{\omega_V} ({p^*\omega_X}|_U).
\end{equation}

Consider the function 
\begin{equation}\label{O5}
\hat f: V\to \R_+, \qquad \hat f= \tr_{p^\star \omega}\omega_{V}
\end{equation}
defined and \emph{smooth} on $V$. Indeed, the fact that $\hat f$ is smooth follows from the assumption \eqref{O2}, combined with the fact that 
$\omega$ is the solution of \eqref{MAF}. More precisely, the latter assumptions imply that $p^*\omega$ is quasi-isometric to $\omega_V$, and then the version of Evans-Krylov theorem proved by Y. Wang in \cite[Theorem~1.1]{WangY} combined with the usual bootstrap shows that $p^*\omega$ is a smooth Kähler metric. 
Moreover, we can assume that 
\[{\rm Gal}(p)\subset \mathrm{U}(n)\]
so that $\hat f=p^*f$ is the pull-back of a function $f$ defined on $U$, smooth in orbifold sense.
\smallskip

\noindent As we will explain below, the calculations in \cite{GP24} combined with Lemma~\ref{quanstrikt} show the following

\begin{claim}
\label{claim0}
Let $\alpha \in (0,1)$. There exists a cutoff function $\widetilde \chi$ on $U$ supported on $U_{3/4}$ such that we have 
\begin{equation}
\label{O6}
\Delta_{\omega}(\chi f^\alpha)\geq -C_0\widetilde\chi f^\alpha,
\end{equation}
where $C_0=C_0(\alpha, A, B, C,D,n)$ is a positive constant.
\end{claim}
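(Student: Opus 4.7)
The plan is to first establish a pointwise lower bound for $\Delta_\om \log f$ on $U_{\rm reg}$ via the Chern-Lu inequality applied on the smooth local cover $V$, and then to compute $\Delta_\om(\chi f^\alpha)$ via the product rule, absorbing the gradient cross-term by a Cauchy-Schwarz inequality with a carefully chosen, non-constant parameter.

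For the first step, I would apply Chern-Lu to the identity map $(V,p^\star\om) \to (V,\om_V)$. Since $\om_V$ is flat and thus has vanishing holomorphic bisectional curvature, this yields
\begin{equation*}
\Delta_{p^\star\om} \log \hat f \ge \frac{\langle \Ric(p^\star\om),\om_V\rangle_{p^\star\om}}{\hat f}.
\end{equation*}
Combining $\Ric(p^\star\om) \ge -A\, p^\star\om_X$ (hypothesis (1) of Lemma~\ref{quanstrikt}) with the elementary inequality $\langle\alpha,\beta\rangle_{p^\star\om} \le (\tr_{p^\star\om}\alpha)(\tr_{p^\star\om}\beta)$ for nonnegative $(1,1)$-forms, and with the bound $\om \ge \ep_0\om_X$ furnished by Lemma~\ref{quanstrikt} (whence $\tr_{p^\star\om}(p^\star\om_X) \le n/\ep_0$), I obtain
\begin{equation*}
\Delta_\om \log f \ge -M \quad\text{on } U_{\rm reg}, \qquad M := nA/\ep_0,
\end{equation*}
with $\ep_0 = \ep_0(A,B,C,n)$.

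Setting $g := f^\alpha$, the identity $\Delta_\om \log g = \alpha\Delta_\om \log f \ge -\alpha M$ translates into the key pointwise bound $|\partial g|^2_\om \le g(\Delta_\om g + \alpha M g)$ (in particular $\Delta_\om g \ge -\alpha M g$). Expanding $\Delta_\om(\chi g) = g\Delta_\om \chi + \chi\Delta_\om g + 2\mathrm{Re}\langle \partial\chi,\bar\partial g\rangle_\om$ and estimating the cross-term by Cauchy-Schwarz $2ab \le \nu^{-1}a^2 + \nu b^2$ with the $g$-dependent parameter $\nu = \chi/g$, followed by the bound on $|\partial g|^2_\om/g$, one gets
\begin{equation*}
|2\mathrm{Re}\langle \partial\chi, \bar\partial g\rangle_\om| \le g\,\frac{|\partial\chi|^2_\om}{\chi} + \chi\,\frac{|\partial g|^2_\om}{g} \le g\,\frac{|\partial\chi|^2_\om}{\chi} + \chi\Delta_\om g + \alpha M\chi g.
\end{equation*}
The two occurrences of $\chi\Delta_\om g$ then cancel, leaving
\begin{equation*}
\Delta_\om(\chi g) \ge g\Bigl(\Delta_\om \chi - |\partial\chi|^2_\om/\chi - \alpha M\chi\Bigr).
\end{equation*}
Using $\om \ge \ep_0\om_X$ together with the cutoff estimates \eqref{O4} bounds the bracket from below by $-C_0$ for an explicit constant $C_0 = C_0(\alpha,A,B,C,D,n)$. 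Taking $\widetilde\chi$ to be any smooth cutoff equal to $1$ on $\supp\chi$ and supported in $U_{3/4}$, the inequality $\Delta_\om(\chi f^\alpha) \ge -C_0\widetilde\chi f^\alpha$ holds on $U_{\rm reg}$ and extends to $U$ by orbifold-smoothness.

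The delicate point is the choice of absorption parameter. A constant-parameter Cauchy-Schwarz would produce a remainder of the form $c\chi g^2$ (inherited from the $\alpha Mg^2$ term in $|\partial g|^2_\om \le g\Delta_\om g + \alpha Mg^2$), which would force $C_0$ to depend on $\sup f$ and thus be incompatible with the quantitative statement. The $g$-dependent choice $\nu = \chi/g$ is precisely what allows the $\chi\Delta_\om g$ terms to cancel, keeping the final right-hand side linear in $g = f^\alpha$.
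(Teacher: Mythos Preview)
Your argument is correct and follows the same route as the paper: Chern--Lu on the flat local cover to get a lower bound on $\Delta_\omega\log f$, then a product-rule computation for $\Delta_\omega(\chi f^\alpha)$ using the cutoff estimates together with $\omega\ge\ep_0\omega_X$ from Lemma~\ref{quanstrikt}. The paper does not spell out the second step but defers to \cite[Thm~5.15, Step~2]{GP24}; your Cauchy--Schwarz with the variable parameter $\nu=\chi/g$ is precisely the computation carried out there, and your explanation of why this choice is forced (to keep the right-hand side linear in $f^\alpha$) is accurate.

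Two minor remarks. First, the paper reaches the slightly sharper constant $M=A\ep_0^{-1}$ by passing directly from $\Ric p^\star\omega\ge -A p^\star\omega_X\ge -A\ep_0^{-1}p^\star\omega$ and using $\langle p^\star\omega,\omega_V\rangle_{p^\star\omega}=\hat f$, rather than your detour through $\tr_{p^\star\omega}(p^\star\omega_X)\le n/\ep_0$; either works. Second, your choice of $\widetilde\chi$ (equal to $1$ on $\supp\chi$, supported in $U_{3/4}$) tacitly assumes $\supp\chi\Subset U_{3/4}$; this is not stated among the listed properties of $\chi$ (only $\supp\chi\Subset U$ and $\chi|_{U_{1/2}}\equiv 1$), so you should either add it as a harmless extra requirement on $\chi$ or remark that one may shrink $\chi$ accordingly.
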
 

\begin{proof}[Proof of Claim~\ref{claim0}]
The heart of the matter is that we have 
\begin{equation}\label{O14}
\Delta_{p^\star\omega}\log \hat f\geq - A\ep_0^{-1},
\end{equation}
by Chern-Lu inequality, since Lemma~\ref{quanstrikt} yields
 \[\Ric {p^\star \omega}\geq -Ap^{\star}\omega_X\ge -A\ep_0^{-1}\omega\]
and the curvature of the flat metric $\omega_V$ is zero. 

On the other hand, the inequality
\begin{equation}\label{O15}
\sup_{U}(\chi^{-1}|\nabla \chi|^2_{\omega_{X}}+ |dd^c\chi|_{\omega_{X}})\geq {\ep_0} \sup_{U}(\chi^{-1}|\nabla \chi|^2_{\omega}+ |dd^c\chi|_{\omega})
\end{equation}
equally holds, by Lemma \ref{quanstrikt} again.

Then the calculations in Step 2 of the proof of \cite[Thm 5.15]{GP24} show that for any $\alpha \in (0,1)$ there exists a constant $C_\alpha$ depending on $\alpha, A\ep_0^{-1}, D, n$ such that 
\begin{equation}\label{O16}
\Delta_{p^\star\omega}(\chi_V \hat f^\alpha)\geq -C_\alpha \widetilde\chi_V \hat f^\alpha
\end{equation}
holds, where $\chi_V:= \chi\circ p$. Descending this inequality to $U$ yields \eqref{O6}.
\end{proof}
\smallskip

As explained in \cite[\textsection~3]{GP24}, the results proved in \cite{GPSS22} yield the existence of a Green kernel $\cG$ for $(\Xr, \omega)$ along with positive constants $\gamma> 0, G> 0$ such that 
\begin{equation}\label{O8}
\int_{\Xr}\cG_x^{1+ \gamma}dV_\omega< G
\end{equation}
for all $x\in \Xr$.
\medskip

\noindent The next statement follows:

\begin{prop}\label{quanorbi}
There exists a positive constant $C_U$ only depending on $C_0, \gamma, G$ as well as an upper bound for $\displaystyle \int_V\omega_V\wedge p^\star\omega^{n-1}$
such that $f\leq C_U$.
\end{prop}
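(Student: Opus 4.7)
The plan is to upgrade the pointwise elliptic inequality of Claim~\ref{claim0} into an $L^\infty$ bound by pairing it with the Green kernel $\cG$ of $(\Xr,\omega)$, exploiting the uniform $L^{1+\gamma}$ control \eqref{O8} coming from \cite{GPSS22}. This is precisely in the spirit of the Harnack-type argument of \cite[\textsection~3]{GP24}. I would set $u:=\chi f^\alpha$ for a small parameter $\alpha\in(0,1)$ to be chosen later. Since $u$ is bounded, compactly supported inside $X^{\rm orb}$, and orbifold-smooth, and since the orbifold singular locus has complex codimension $\ge 2$, standard arguments ensure that the distributional and classical Laplacians of $u$ agree on $\Xr$ and that $\int_X \Delta_\omega u \, \omega^n = 0$ by Stokes.

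Normalizing the Green kernel so that $\cG_x + K \ge 0$ for a constant $K$ independent of $x$, the Green representation formula combined with Claim~\ref{claim0} yields
\begin{equation*}
u(x) \;\le\; \frac{1}{\vol(X,\omega)}\int_X u\,\omega^n \;+\; C_0\int_X (\cG_x+K)\,\widetilde\chi\, f^\alpha\,\omega^n, \qquad x\in\Xr.
\end{equation*}
Hölder's inequality with exponents $(1+\gamma,\tfrac{1+\gamma}{\gamma})$ together with \eqref{O8} then gives
\begin{equation*}
\int_X \cG_x\,\widetilde\chi\, f^\alpha\,\omega^n \;\le\; G^{\frac{1}{1+\gamma}}\Bigl(\int_X f^{\alpha(1+\gamma)/\gamma}\,\omega^n\Bigr)^{\frac{\gamma}{1+\gamma}}.
\end{equation*}
I would fix $\alpha\le \tfrac{\gamma}{1+\gamma}$ so that $\alpha(1+\gamma)/\gamma\le 1$, so that the elementary bound $t^{\alpha(1+\gamma)/\gamma}\le 1+t$ reduces matters to an $L^1$ estimate on $f$. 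The latter follows from the trace identity $\hat f\,(p^\star\omega)^n = n\,\omega_V\wedge(p^\star\omega)^{n-1}$ together with Galois invariance, giving
\begin{equation*}
\int_U f\,\omega^n \;=\; \frac{n}{|\mathrm{Gal}(p)|}\int_V \omega_V\wedge(p^\star\omega)^{n-1},
\end{equation*}
whose right-hand side is finite by hypothesis. The average of $u$ and the $K$-term are controlled identically, leading to $u(x)\le C_U$ for all $x\in\Xr$. Since $\chi\equiv 1$ on $U_{1/2}$, this translates into $f\le C_U^{1/\alpha}$ on $U_{1/2}$, which is the sought bound on a fixed neighbourhood of the base-point (and equivalently on the euclidean ball $B_{\C^n}(0,1/2)$ via $p$).

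The step I expect to require the most care is the justification of the Green representation for the orbifold-smooth function $u$, which is not smooth in the ambient sense across the orbifold singular set of $U$. This is handled by a standard approximation/codimension-two argument using the boundedness of $u$ and the fact that $\Delta_\omega u$ is locally bounded on $U_{\rm reg}$, so that the integration-by-parts and representation manipulations go through as in the pluripotential-theoretic literature. Beyond this technical point, the proof is a single application of Hölder's inequality against the $L^{1+\gamma}$ bound on $\cG$, with no need for Moser iteration.
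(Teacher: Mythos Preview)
Your proposal is correct and follows essentially the same route as the paper: the elliptic inequality from Claim~\ref{claim0} is paired with the Green kernel bound \eqref{O8} via H\"older's inequality with exponents $(1+\gamma,\tfrac{1+\gamma}{\gamma})$, the exponent is chosen as $\alpha=\gamma/(1+\gamma)$, and the $L^1$ bound on $f$ comes from the trace identity $\int_U f\,\omega^n=\tfrac{n}{|G|}\int_V\omega_V\wedge p^\star\omega^{n-1}$ exactly as you write. The only place where the paper is more explicit is your ``delicate step'': it constructs codimension-two cutoffs $\rho_k$ supported in $U_{\rm reg}$ with $\int_U(|\nabla\rho_k|_\omega^4+|\Delta_\omega\rho_k|^2)\,\omega^n\to 0$, using the qualitative quasi-isometry between $p^\star\omega$ and $\omega_V$ (the standing assumption \eqref{O2}) to transfer the Euclidean cutoff estimates to the metric $\omega$, and then invokes the quantitative Harnack inequality \cite[(3.1)]{GP24}.
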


\begin{proof}
Let $x\in X_{\rm reg}$ and let $p:V\to U$ be as in \eqref{Galois cover}. The inverse image $p^{-1}(U_{\rm sing})$ is the trace on $V\subset \mathbb C^n$ of a finite union of linear subspaces of codimension at least two. It is elementary in that setting to produce a sequence $(\tau_k)$ of smooth functions compactly supported on $p^{-1}(U_\reg)$ which converge to $1$ as $k\to+\infty$ and satisfy
\begin{equation}\label{cut off}
\limsup_{k\to +\infty}\int_{V}(|\nabla \tau_k|_{\omega_V}^{4}+ |\Delta_{\omega_V} \tau_k|^{2})\omega_V^n= 0.
\end{equation}
E.g., if $(f_1=\ldots =f_r=0)$ is the equation of a component of $p^{-1}(U_{\rm sing})$, then the function $\tau_k=\xi_k(\log(-\log |f|^2))$ where $\xi_k\equiv 1$ on $(-\infty, k]$ and $\xi_k\equiv 0$ on $[k+1, +\infty)$ will satisfy $|d\tau_k|^2_{\omega_V}+|dd^c \tau_k|^2_{\omega_V} \le \frac{C}{(-\log |f|^2)|f|^2}$ and it is straightforward to check the integrability property \eqref{cut off} above. In order to take into account all the irreducible components of $p^{-1}(U_{\rm reg})$, we just consider the product of all such cutoff functions for each component. Finally, since $p^{-1}(U_{\rm sing})$ is $G$-invariant, one can average such a cutoff function $\tau_k$ in order to obtain one which is $G$-invariant (i.e. $\tau_k=p^*\rho_k$) and still satisfies \eqref{cut off}. Finally, since $p^*\omega|_U$ is (qualitatively) quasi-isometric to $\omega_V$, we have found a sequence of cutoff functions $\rho_k$ supported on $U_{\rm reg}$ such that
\begin{equation}
\limsup_{k\to +\infty}\int_{U}(|\nabla \rho_k|_{\omega}^{4}+ |\Delta_{\omega} \rho_k|^{2})\omega^n= 0.
\end{equation}
Now one can apply Proposition~3.3 in \cite{GP24} to the function $\chi f^\alpha$ satisfying the elliptic inequality \eqref{O6}. More precisely, we use its quantitative form, i.e. the equation (3.1) in {\it loc. cit.}, which yields
\begin{equation}
\label{borne sup}
f(x)^\alpha \le \frac 1{[\omega_X]^n} \int_X \chi f^\alpha \omega^n + C_\alpha\|\cG_x\|_{L^{1+\gamma}} \|\widetilde \chi f^\alpha\|_{L^{1+\frac 1\gamma}}.
\end{equation}
Finally, let us observe that $\int_U f \omega^n \le \frac{n}{|G|} \int_V \omega_V \wedge p^*\omega^{n-1}$ and $f^\alpha \le 1+f$ since $\alpha \in (0,1)$. Now choose $\alpha=\frac {\gamma}{\gamma+1}$ and the lemma follows from the latter observation coupled with the upper bound \eqref{borne sup}. 
\end{proof}

\smallskip

\noindent Let us end this section by the following observation.

\begin{rem}\label{rem:more_quantitative}
The $L^1$ norm of our function $f= \tr_{p^\star \omega}\omega_{V}$ with respect to the Euclidean metric $\omega_V$ on $V_{1/2}$
is in fact bounded by a constant only depending on $\sup_V \mathrm{tr}_{\omega_V}p^*({\omega_X}|_U)$ and $\displaystyle \Vert \varphi\Vert_{L^\infty}$. This is a direct consequence of the Chern-Levine-Nirenberg inequalities. In conclusion, as soon as we know \emph{a priori} that $\omega$ has orbifold singularities, the constant $C_U$ depends on $n, \gamma, A, B, C, D,E, G$.
\end{rem}

\section{Applications}

\subsection{Orbifold regularity for singular Ricci-flat Kähler metrics}
\label{sec appli}
\noindent Let $(X, \omega_X)$ be a compact Kähler space with log terminal singularities admitting a locally trivial algebraic approximation, i.e. such that the following holds. 

\begin{ass}\label{defor} There exists a locally trivial proper map $p:\cX\to \bD$ over a unit disk $\bD\subset \C$ such that $X= p^{-1}(0)$ is the central fiber, and such that there exists a sequence $(t_k)\subset \bD$ converging to $0$ for which the corresponding fibers $X_k:= p^{-1}(t_k)$ is projective.
\end{ass} 

Up to shrinking $\bD$, the \emph{locally trivial} assumption means that there exists a covering $(\mathcal U_\alpha)$ of $\cX$ such that for each $\alpha$ we have a biholomorphism 
\begin{equation}\label{A0}
\mathcal U_\alpha \to U_{\alpha}\times \bD 
\end{equation}
over the disk, where $U_\alpha:= \mathcal U_\alpha \cap X$. In particular, it follows that each fiber of $p$ has log terminal singularities.
\smallskip

\noindent We collect next a few results which will be used in this section. The first and foremost is due to J. Bingener, cf. \cite[Corollary~4.8 \& Theorem~6.3]{Bingener}, and represents the singular analogue of the Kodaira-Spencer stability theorem.

\begin{thm}\cite{Bingener} Let $p:\cX\to \bD$ be a locally trivial, proper map, such that the central fiber $X$ admits a K\"ahler metric $\omega_X$ and has rational singularities. Then there exists a Hermitian metric $\omega_{\cX}$ whose restriction $\omega_{X_t}:={\omega_{\cX}}|_{X_t}$ to each fiber $X_t= p^{-1}(t)$ is a smooth K\"ahler metric.	
\end{thm}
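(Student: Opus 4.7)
The plan is to exploit the local trivializations $\phi_\alpha: \mathcal U_\alpha \xrightarrow{\sim} U_\alpha \times \bD$ to transport the Kähler form on $X$ into the family, patch the resulting local models via a partition of unity, and finally correct the outcome using the rational singularities hypothesis so that the restriction to each fiber becomes closed.

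By Definition~\ref{def1} and a local holomorphic embedding $U_\alpha \hookrightarrow \C^{N_\alpha}$, after refining the cover one may write $\omega_X|_{U_\alpha} = dd^c\varphi_\alpha$ for a smooth strictly plurisubharmonic function $\varphi_\alpha$ on $U_\alpha$. Denoting by $\pi_\alpha := \mathrm{pr}_1 \circ \phi_\alpha : \mathcal U_\alpha \to U_\alpha$ the projection to the fiber factor, define the local Kähler forms
\[\tilde\omega_\alpha := dd^c(\pi_\alpha^*\varphi_\alpha) + M \, \phi_\alpha^*(i\, dt\wedge d\bar t)\]
on each $\mathcal U_\alpha$ for a sufficiently large constant $M>0$. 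Choose a smooth partition of unity $(\rho_\alpha)$ subordinate to the cover and set
\[\omega_{\cX} := \sum_\alpha \rho_\alpha \tilde\omega_\alpha,\]
a globally defined positive Hermitian $(1,1)$-form on $\cX$ as a convex combination of Kähler forms. Since the trivializations restrict to the identity on the central fiber, one has $\tilde\omega_\alpha|_X = \omega_X|_{U_\alpha}$, and therefore $\omega_{\cX}|_X = \omega_X$, which is Kähler by hypothesis.

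On a nearby fiber $X_t$ the transverse contribution $dt\wedge d\bar t$ drops out, and the restriction reduces to the convex combination $\sum_\alpha (\rho_\alpha|_{X_t}) \, \omega_\alpha^t$ of the transported Kähler forms $\omega_\alpha^t := (\phi_\alpha|_{X_t})^*(\omega_X|_{U_\alpha})$. Positivity is automatic, but $d$-closedness is not: the differences $\omega_\alpha^t - \omega_\beta^t$ are $dd^c$-exact with potentials that are pluriharmonic on $X_0$ but generally not on nearby $X_t$, so that $d\omega_{\cX}|_{X_t} = \sum_\alpha d(\rho_\alpha|_{X_t}) \wedge \omega_\alpha^t$ need not vanish. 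The remedy is to modify the local potentials $\pi_\alpha^*\varphi_\alpha$ by smooth corrections $\psi_\alpha$ vanishing on $X$ so that the differences $(\pi_\alpha^*\varphi_\alpha + \psi_\alpha) - (\pi_\beta^*\varphi_\beta + \psi_\beta)$ become pluriharmonic in restriction to \emph{every} fiber simultaneously, not just the central one. Constructing these $\psi_\alpha$ amounts to solving a \v{C}ech cohomological problem whose vanishing is guaranteed by a relative $\partial\bar\partial$-lemma together with the constancy of $R^q p_*\cO_{\cX}$ for locally trivial deformations of compact Kähler spaces with rational singularities. This is precisely the content of Bingener's stability result, and in our view is the main obstacle: it is a genuine extension of the Kodaira-Spencer stability theorem to the singular setting and cannot be bypassed by purely local arguments. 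Once such corrections are in place, substituting $\pi_\alpha^*\varphi_\alpha + \psi_\alpha$ for $\pi_\alpha^*\varphi_\alpha$ in the construction above yields a Hermitian $(1,1)$-form on $\cX$ whose restriction to each $X_t$ is Kähler, global positivity being preserved by taking $M$ large and shrinking $\bD$ if necessary.
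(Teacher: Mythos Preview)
The paper does not give its own proof of this theorem: it is quoted from Bingener's work (Corollary~4.8 and Theorem~6.3 in the cited reference) and used as a black box. There is therefore no argument in the paper to compare your attempt against.

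As for your sketch itself: the construction you outline---transport local potentials via the trivializations, patch with a partition of unity, then correct to restore fiberwise closedness---is indeed the Kodaira--Spencer template, and you correctly isolate the main obstacle, namely producing the corrections $\psi_\alpha$. However, at that crucial point you write that the required cohomological vanishing ``is precisely the content of Bingener's stability result,'' which makes the argument circular: the theorem you are asked to prove \emph{is} Bingener's stability result. If your intention was instead to reduce the statement to more primitive ingredients (a relative $\partial\bar\partial$-lemma and the constancy of $R^q p_\ast\cO_{\cX}$), those ingredients are themselves the substance of Bingener's paper and would need independent justification in the singular setting. As written, your text is a reasonable explanation of \emph{why} one expects the theorem to hold and \emph{where} the difficulty lies, but it is not a self-contained proof.
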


Actually, the result in \emph{loc. cit.} is more general than the statement above, but it is this version that will be relevant for us here.
\medskip

Since the deformation is locally trivial and $K_X$ is a $\mathbb Q$-line bundle, it follows that the relative canonical bundle $K_{\cX/\mathbb D}$ is a $\mathbb Q$-line bundle. So we can consider $\mu_t$ the canonical measure on $X_t$ induced by a fixed, smooth metric $h$ on $K_{\cX/\bD}$. Since the function $t\mapsto \int_{X_t} \omega_{X_t}^n$ is smooth (as one can see using local triviality), we can rescale $h$ so that $\mu_{t}(X_t)=\int_{X_t}\omega_{X_t}^n$. Now we consider for each $t\in \mathbb D$ the equation
\begin{equation}\label{A1}
(\omega_{X_t}+ dd^c\varphi_t)^n= \mu_t, \qquad \sup_{X_t}\varphi_t= 0
\end{equation}
which we know has a unique solution. \medskip

\noindent By analogy with the smooth case, the following statement is established in \cite[Proposition~6.6]{BGL}, cf also \cite[Corollary~5.5]{PT25}.

\begin{lem} 
\label{ke conv}
Let $\omega_t:= \omega_{X_t}+ dd^c\varphi_t$ be the solution of \eqref{A1}. Then 
\[\omega_t\to \omega_0\]
as $t\to 0$ in the weak sense, and locally smoothly on compact subsets of $\Xr$. Here the convergence is understood via the local identification of fibers, cf. \eqref{A0}.
\end{lem}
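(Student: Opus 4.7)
The plan is to follow the standard template for stability of Monge-Ampère equations in families: establish uniform $L^\infty$ bounds for the potentials $\varphi_t$, extract a weak subsequential limit, identify it by uniqueness, and finally upgrade to smooth convergence on the regular locus via higher-order a priori estimates. Throughout, I would use the local trivialization \eqref{A0} to view all objects on a fixed model of the fiber $X$.

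For the uniform estimate, the key input is that, after transport via the trivialization, the densities $\mu_t/\omega_X^n$ admit a uniform $L^p$ bound for some $p>1$, owing to the log terminal assumption (which is preserved along the family thanks to local triviality) and to the smoothness of $h$ on $K_{\cX/\bD}$. Kolodziej-type estimates, as in \cite{EGZ09}, then yield $\sup_X |\varphi_t| \leq C$ independent of $t$. With this in hand, compactness of quasi-psh functions produces, from any sequence $t_k\to 0$, a subsequence along which $\varphi_{t_k}$ converges in $L^1(X)$ and almost everywhere to some bounded $\omega_X$-psh function $\varphi_\infty$. Passing to the limit in \eqref{A1} by the weak continuity of the Monge-Ampère operator on uniformly bounded potentials, combined with the obvious weak convergence $\mu_{t_k}\to \mu_0$, one obtains
\[
(\omega_X + dd^c \varphi_\infty)^n = \mu_0.
\]
The uniqueness part of \cite{EGZ09} forces $\varphi_\infty = \varphi_0$, and since the limit is independent of the subsequence the full family converges weakly.

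To promote weak convergence to smooth convergence on compact subsets of $\Xr$, fix $K \Subset \Xr$. Under the trivialization, the equation on $K$ is a smooth non-degenerate complex Monge-Ampère equation whose data depend smoothly on $t$. The uniform $L^\infty$ bound on $\varphi_t$, together with the Aubin-Yau Laplacian estimate (applicable because $\omega_{X_t}$ has bisectional curvature bounded on $K$ uniformly in $t$), yields a uniform $C^{1,1}$ bound on $\varphi_t$ over $K$. Evans-Krylov and a standard Schauder bootstrap then give uniform $C^{k,\alpha}$ bounds for all $k$. Combined with the already-established weak convergence, Arzelà-Ascoli delivers smooth convergence on $K$, and hence locally smooth convergence on all of $\Xr$.

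The main obstacle I expect is the uniform $L^p$ control in the first step. The trivialization \eqref{A0} is merely biholomorphic along fibers; it does not preserve the Hermitian metric $h$ on $K_{\cX/\bD}$ used to define $\mu_t$. Consequently one must compare the fiberwise canonical measures carefully, typically by writing $\mu_t = e^{f_t}\mu_0$ for a family of densities $f_t$ and controlling $f_t$ via local potentials of $h$ on charts $\mathcal U_\alpha$. This works because $h$ is smooth across $\cX$ and the singularity type of the fibers is preserved under local triviality, so the comparison reduces to a continuity statement for a smooth fiberwise object. Once the $L^p$ bound is secured, the remaining steps follow classical lines, as executed in \cite{BGL, PT25}.
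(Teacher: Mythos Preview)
The paper does not actually prove this lemma; it simply records it as a consequence of \cite[Proposition~6.6]{BGL} and \cite[Corollary~5.5]{PT25}. Your outline is a faithful summary of the standard argument carried out in those references: uniform $L^\infty$ bounds for $\varphi_t$ (obtained, as you anticipate and as the paper itself notes in item~C of the proof of Theorem~\ref{appl1}, by passing to a simultaneous resolution and invoking Ko\l odziej/\cite{DGG23}-type estimates), weak compactness of bounded quasi-psh functions, identification of the limit via uniqueness, and then higher-order estimates on the regular locus.

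One small caveat worth flagging: the Aubin--Yau Laplacian estimate you invoke for the $C^2$ bound on $K\Subset X_{\rm reg}$ is a \emph{global} estimate based on the maximum principle, and on a singular variety the maximum of the auxiliary function need not lie in $K$ (nor even in $X_{\rm reg}$). In the cited works this is circumvented either by running the Laplacian estimate on a simultaneous resolution of $\cX\to\bD$ (where it is genuinely global and the reference metric degenerates in a controlled way), or by using an interior/local Laplacian estimate. With that adjustment your strategy goes through, and it coincides with the approach underlying the references the paper cites.
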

\medskip

\noindent As already mentioned, in the projective setting the following result holds.
\begin{thm}\label{orbisI}\cite{LiTian19, GP24} Let $X$ be a projective variety with log terminal singularities. Then the solution of \eqref{MAF} has orbifold singularities on $X^{\rm orb}$.  
\end{thm}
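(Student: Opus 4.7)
The plan is to follow the Li–Tian approach via a partial desingularization, which is where the projectivity hypothesis enters in an essential way. By a construction of C.~Xu built from the minimal model program, since $X$ is projective there exists a projective variety $\wX$ with only quotient singularities (so $\wX = \wX^{\rm orb}$) together with a crepant birational morphism $\pi \colon \wX \to X$ which is an isomorphism over $X^{\rm orb}$. The point is that on $\wX$ orbifold regularity of canonical metrics is already known by the first bullet of the introduction, being a direct adaptation of the classical smooth Yau theorem to the orbifold category.

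First, I would lift the problem to $\wX$. Fix an ample class $[\eta]$ on $\wX$ and, for each small $\delta > 0$, pick a Kähler form $\omega_\delta \in \pi^*[\omega_X] + \delta[\eta]$. I would then solve
\[
(\omega_\delta + dd^c \hat\vp_\delta)^n = c_\delta \, \mu_{\pi^* h}, \qquad \sup_{\wX} \hat\vp_\delta = 0,
\]
with $c_\delta$ a normalizing constant so that both sides have equal total mass. Because $\wX = \wX^{\rm orb}$, each $\hat\om_\delta := \omega_\delta + dd^c \hat\vp_\delta$ automatically has orbifold singularities on all of $\wX$. Next, as $\delta \to 0$, pluripotential stability (\cite{EGZ09}) yields $L^\infty$ convergence $\hat\vp_\delta \to \hat\vp_0$ to a solution in the nef-and-big class $\pi^*[\omega_X]$; by uniqueness of MA solutions and the crepant property of $\pi$, the limit $\hat\om_0$ coincides with $\pi^* \om$ over $\pi^{-1}(X^{\rm orb})$. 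Since $\pi$ restricts to an isomorphism there, orbifold regularity of $\hat\om_0$ on $\pi^{-1}(X^{\rm orb})$ descends immediately to orbifold regularity of $\om$ on $X^{\rm orb}$.

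The main obstacle is the class degeneration $\omega_\delta \to \pi^* \omega_X$: the limit form is only semipositive on $\wX$ (being degenerate along the $\pi$-exceptional locus), and a priori the qualitative orbifold regularity of each $\hat\om_\delta$ could deteriorate as $\delta \to 0$. Precisely this is what Proposition~\ref{quanorbi} is designed to handle: applied to $\hat\om_\delta$ on $\wX$, it bounds $\tr_{\hat\om_\delta} \om_{\rm orb}$ uniformly in terms of the constants $A, B, C, D$ of Lemma~\ref{quanstrikt} and the Green kernel data $\gamma, G$. Each of these admits a bound uniform in $\delta$: the constants $A,B,D$ come from the fixed geometry of $\wX$ and $\pi^* h$; the $L^\infty$ bound $C$ on $\hat\vp_\delta$ follows from the standard Kołodziej-type estimates applied to a bounded family of Kähler classes with uniformly $L^p$ densities; and $\gamma, G$ are uniform thanks to the uniform lower volume and Sobolev data along the family $\om_\delta$. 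Passing to the limit and descending via $\pi$ then yields Theorem~\ref{orbisI}. The alternative route (\cite{GP24}) replaces Xu's construction by an Artin approximation argument that produces, analytically-locally near a point of $X^{\rm orb}$, an algebraic model on which the partial desingularization exists; this is more self-contained but remains crucially dependent on projectivity through Artin's theorem.
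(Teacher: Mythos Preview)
The paper does not give its own proof of Theorem~\ref{orbisI}: it is stated as a cited result from \cite{LiTian19, GP24} and used as a black box in the proof of Theorem~\ref{appl1}. What the paper does say (Remark~\ref{rem orbi 2} and the paragraph after it) is that the Li--Tian proof hinges on Xu's partial desingularization $\pi:\wX\to X$ with $\wX=\wX^{\rm orb}$, while the alternative \cite{GP24} proof replaces this by an Artin approximation argument. Your high-level outline is therefore an accurate description of the two known approaches.

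That said, your attempt to run the Li--Tian strategy by invoking Proposition~\ref{quanorbi} from the present paper contains a genuine gap. The constant $B$ in Lemma~\ref{quanstrikt} is the upper bound for the holomorphic bisectional curvature of the \emph{reference} K\"ahler metric, which in your setup is $\omega_\delta\in\pi^*[\omega_X]+\delta[\eta]$. As $\delta\to 0$ this metric degenerates along the exceptional locus of $\pi$ (it limits to the merely semipositive form $\pi^*\omega_X$), and there is no reason for its bisectional curvature to admit a uniform upper bound---indeed this typically fails. So the claim that ``$A,B,D$ come from the fixed geometry of $\wX$'' is incorrect for $B$. The same degeneration threatens the uniformity of the Green kernel constants $\gamma,G$: the Sobolev/volume data you would need are tied to the degenerating reference geometry, and you would have to argue carefully (as in the proof of Theorem~\ref{appl1}, item~$\gamma,G$, which uses a \emph{non-degenerating} family) that these remain controlled. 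In short, Proposition~\ref{quanorbi} is tailored to the locally trivial deformation setting of Theorem~\ref{appl1}, where the reference metrics $\omega_{X_t}$ are restrictions of a fixed ambient metric and Griffiths' formula gives a uniform $B$; it does not transfer verbatim to a birational degeneration on $\wX$. The actual proofs in \cite{LiTian19} and \cite{GP24} handle this step differently, and you would need to consult those references to close the argument.
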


\noindent With this at hand, we have the following statement.
\begin{thm}\label{appl1} Let $(X, \omega_X)$ be a normal compact K\"ahler space, with log terminal singularities and such that Assumption \ref{defor} is satisfied. 
Then the unique solution of \eqref{A1} for $t= 0$ has orbifold singularities when restricted to $X^{\rm orb}$.
\end{thm}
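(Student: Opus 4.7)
The plan is to implement the strategy outlined in the introduction, reducing orbifold regularity at $t = 0$ to uniform estimates on the projective fibers $X_{t_k}$ via Proposition~\ref{quanorbi}. Fix $x \in X^{\rm orb}$ and a uniformizing chart $p\colon V = B_{\C^n}(0,1) \to U$ with Galois group $G \subset \mathrm{U}(n)$. Using the local triviality of $\cX \to \bD$, after shrinking we identify a neighborhood of $x$ in $\cX$ with $U \times \bD$ and obtain corresponding uniformizing charts $p_t\colon V \to U_t$ with $p_0 = p$. Bingener's theorem furnishes a reference metric $\omega_{\cX}$ whose restrictions $\omega_{X_t}$ to the fibers are K\"ahler, and we set $f_t := \tr_{\omega_t}(\omega_{X_t}|_{U_t})$, $\hat f_t := p_t^*f_t$. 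For $t = t_k$ the fiber $X_{t_k}$ is projective, so Theorem~\ref{orbisI} implies that $\omega_{t_k}$ has orbifold singularities on $X_{t_k}^{\rm orb}$; in particular $\hat f_{t_k}$ is smooth on $V$ and \eqref{O2} holds for $p_{t_k}$.

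The crux is to apply Proposition~\ref{quanorbi} to each $\omega_{t_k}$ with constants independent of $k$. Local triviality provides uniform bounds on all geometric inputs: the curvature bound $A$ on $i\Theta_h(K_{\cX/\bD})$ restricts uniformly to the fibers; a uniform holomorphic bisectional curvature bound $B$ for $(\omega_{X_t})$ follows from the smoothness of $\omega_{\cX}$ on the total space; and the cutoff constant $D$ and quantity $E$ from \eqref{O4} can be built from a single cutoff $\chi$ and a single chart $p$ after trivialization. A uniform $L^\infty$ bound $C$ for the potentials $\vp_t$ follows from \cite{BGL} (see also \cite[Corollary~5.5]{PT25}). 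Lemma~\ref{quanstrikt} then delivers $\omega_{t_k} \ge \ep_0 \omega_{X_{t_k}}$ with $\ep_0 > 0$ independent of $k$. The Green kernel constants $\gamma, G$ in \eqref{O8} are uniform via the estimates of \cite{GPSS22} applied to the family exactly as in \cite{G+}, and $\int_V \omega_V \wedge p_{t_k}^*\omega_{t_k}^{n-1}$ is uniformly bounded by the Chern-Levine-Nirenberg inequalities together with the uniform $L^\infty$ bound on $\vp_{t_k}$ (cf.\ Remark~\ref{rem:more_quantitative}). Proposition~\ref{quanorbi} then yields a constant $C_U$, independent of $k$, with
\[
\sup_V \hat f_{t_k} \le C_U.
\]

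By Lemma~\ref{ke conv}, $\omega_{t_k} \to \omega_0$ locally smoothly on compact subsets of $\Xr$, hence $p_{t_k}^*\omega_{t_k} \to p^*\omega_0$ locally smoothly on $V_0 := p^{-1}(U_{\reg})$; passing to the limit yields $\hat f_0 \le C_U$ on $V_0$, i.e. $p^*\omega_0 \le C_U \omega_V$ on $V_0$. Since $V \setminus V_0$ is an analytic subset of codimension at least two and $p^*\omega_0$ is a closed positive $(1,1)$-current on $V$, this inequality extends across $V \setminus V_0$ by standard pluripotential extension. Combined with the lower bound $p^*\omega_0 \ge \ep_0 \omega_V$ coming from Lemma~\ref{quanstrikt} applied at $t=0$, we obtain that $p^*\omega_0$ is a closed positive current on $V$ with bounded potential, quasi-isometric to $\omega_V$; Wang's Evans-Krylov theorem \cite{WangY} and the usual bootstrap (as already invoked in Section~\ref{osing}) promote it to a smooth K\"ahler metric on $V$. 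Since $x \in X^{\rm orb}$ was arbitrary, $\omega_0|_{X^{\rm orb}}$ has orbifold singularities.

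The main obstacle is the uniformity of all constants in Proposition~\ref{quanorbi} across the family $(\omega_{t_k})$, and in particular the uniform $L^\infty$ bound on the potentials $\vp_t$ together with the uniform control of the Green kernel estimates of \cite{GPSS22} under the locally trivial degeneration; these rely crucially on the fact that the deformation is \emph{locally trivial}, so that all geometric data on the central fiber extend as smooth data on the total space. Once these uniformities are in place, the fiberwise use of \cite{LiTian19} on the projective $X_{t_k}$ and the passage to $t = 0$ via Lemma~\ref{ke conv} and Evans-Krylov are essentially soft.
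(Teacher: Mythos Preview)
Your architecture is the paper's: invoke Theorem~\ref{orbisI} on the projective fibers $X_{t_k}$, check that all constants entering Proposition~\ref{quanorbi} (namely $A,B,C,D,E,\gamma,G$) are uniform in $k$, pass to the limit via Lemma~\ref{ke conv}, and conclude with Evans--Krylov. Two details are wrong, however.

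First, the function $f_t$ is misdefined. You set $f_t = \tr_{\omega_t}(\omega_{X_t}|_{U_t})$, but this quantity is already bounded \emph{globally} on $X_t$ by Lemma~\ref{quanstrikt} and carries no orbifold information: its pullback is $\tr_{p_t^*\omega_t}(p_t^*\omega_{X_t})$, and $p_t^*\omega_{X_t}$ degenerates along the ramification of $p_t$. The function appearing in Proposition~\ref{quanorbi} is $\hat f_t = \tr_{p_t^*\omega_t}\omega_V$, the trace of the \emph{flat} metric on the cover (equivalently $f_t = \tr_{\omega_t}\omega_{\rm orb}$ for an orbifold metric descending $\omega_V$). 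Only with this choice does your appeal to \eqref{O2} and Proposition~\ref{quanorbi} make sense.

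Second, with the correct $\hat f_t$, the implication ``$\hat f_0 \le C_U$, i.e.\ $p^*\omega_0 \le C_U\,\omega_V$'' has the inequality backwards: $\tr_{p^*\omega_0}\omega_V \le C_U$ means exactly $p^*\omega_0 \ge C_U^{-1}\omega_V$. The complementary \emph{upper} bound $p^*\omega_0 \le C\,\omega_V$ does not come from Lemma~\ref{quanstrikt} (which only yields $p^*\omega_0 \ge \ep_0\,p^*\omega_{X_0}$, strictly weaker along the ramification) but from the Monge--Amp\`ere equation: $(p^*\omega_0)^n = p^*\mu_h$ is smooth and bounded on $V$, so the product of the eigenvalues is controlled and the lower bound on each forces an upper bound on each. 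Once both sides of \eqref{laplacian} are in hand, your invocation of \cite{WangY} is correct.
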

\begin{proof}
Consider the metric $\displaystyle \omega_{t_k}$, solution of \eqref{A1} for $t:= t_k$. Since the ambient space is projective, it follows from Theorem~\ref{orbisI} that $\omega_{t_k}$ has orbifold singularities on $X_{t_k}^{\rm orb}$. 

\noindent The proof will be finished once we will have observed that the following uniformity properties hold. Unless indicated, all the constants which appear are assumed to be independent of $|t|< 1/2$.
\begin{enumerate}

\item[A.] \emph{The Ricci curvature of $\omega_t$ is uniformly bounded from below by $\displaystyle -A\omega_{X_t}$.} Indeed, there is $A>0$ such that $\pm i\Theta(K_{\cX/\mathbb D})\le A\omega_{\cX}$. Restricting the latter estimate to $X_t$ and plugging it in \eqref{A1} yields the claim. 
\item[B.] \emph{The holomorphic bisectional curvature of the restriction
\[\omega_{X_t}|_{X_{t}^{\rm reg}} =\omega_{\cX}|_{X_t^{\rm reg}}\]
is bounded from above by a constant $B$.} This is indeed clear by Griffiths formula which shows that the Chern curvature tensor does not increase when restricted to a subbundle. 
\item[C.] \emph{There exists a positive constant $C> 0$ such that 
\[\sup_{X_t}|\varphi_t|\leq C.\]}
Indeed, this follows essentially from \cite{DGG23}, although the precise statement that we would need is not explicitly stated there. A way around is to consider a simultaneous resolution of singularities $\mathcal Y\to \cX$ of the family $\cX\to \mathbb D$ (cf e.g. \cite[Lemma~4.8]{BL18}) and fix a hermitian form $\omega_{\mathcal Y}$ on $\mathcal Y$. Then it is clear that there is $\ep>0$ such that the $L^{1+\ep}$ norm of the densities of the pull backs of $\mu_t$ to $Y_t$ with respect to $\omega_{Y_t}^n$ is uniformly bounded in $t$. The conclusion follows by applying Theorem~3.4 and Theorem~1.1 in \cite{DGG23}. 
\item[D, E.] The fact that the constants $D,E$ from \eqref{O4} can be chosen uniformly in $t$ is an immediate consequence of the local triviality of the family $\cX\to \mathbb D$. 
\item[$\gamma$, G.]	 \emph{The constants $\gamma, G$ in Section \ref{osing} are uniform.} Given how the functions $\mathcal G_x$ are constructed (cf \cite[\textsection~3]{GP24}), the claim is a consequence of the family (and upgraded) version of \cite{GPSS22} proved in \cite[Theorem~A]{GuedjTo} applied to a simultaneous resolution of singularities for $\cX\to \mathbb D$.
\end{enumerate} 

\medskip
\noindent By Proposition~\ref{quanorbi} and Remark \ref{rem:more_quantitative} that follows, we infer that via a local uniformization of $X_{t_k}$, the pull-back of $\displaystyle \omega_{t_k}$ is quasi-isometric with the Euclidean metric, with a constant which is uniform with respect to $k$. By Lemma~\ref{ke conv}, this implies that for any $U\subset X^{\rm orb}$ admitting a uniformizing map $p:V\to U$ as in \eqref{Galois cover}, we have 
\begin{equation}
\label{laplacian}
C^{-1} \omega_V \le p^*({\omega_0}|_U) \le C \omega_V
\end{equation}
as currents on $V$ for some constant $C>0$. To upgrade the above laplacian estimate into higher order ones and see that $p^*\omega|_U$ is indeed a smooth Kähler metric, one can rely on the general version of Evans-Krylov's $C^{2,\alpha}$-estimate proved in \cite{WangY}.
\end{proof}

\noindent As a consequence of the previous theorem, we obtain the next statement.

\begin{cor}\label{orbis}
Let $(X, \omega_X)$ be a compact Kähler space with log terminal singularities such that $c_1(K_X)= 0.$
Let $\omega= \omega_X+ dd^c\varphi$ be the unique Ricci-flat K\"ahler metric in $[\omega_X]$. 

\noindent
Then the restriction $\displaystyle 
\omega|_{X^{\rm orb}}$ has orbifold singularities. 
\end{cor}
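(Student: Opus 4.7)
The plan is to reduce the corollary to Theorem~\ref{appl1} that has just been established. The two ingredients I need are: (i) the Ricci-flat metric $\omega$ on $X$ is actually the solution of the Monge--Ampère equation \eqref{A1} (for $t=0$) with respect to some smooth hermitian metric $h$ on $K_X$, and (ii) $X$ satisfies Assumption~\ref{defor}, so that Theorem~\ref{appl1} applies.

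For (i): since $c_1(K_X)=0$, the $\Q$-line bundle $K_X$ admits a smooth hermitian metric $h$ with vanishing Chern curvature $\Theta_h(K_X)=0$. The normalized canonical measure $\mu_h$ is then a well-defined finite positive measure on $X$ by log-terminality, and after rescaling $h$ we may assume $\mu_h(X)=\int_X\omega_X^n$. The unique bounded solution $\varphi\in\PSH(X,\omega_X)\cap L^\infty(X)$ of \eqref{MAF} provided by \cite{EGZ09} then satisfies $\Ric\omega=-\Theta_h(K_X)=0$ on $\Xr$ by \eqref{KE}. By uniqueness of the Ricci-flat representative in $[\omega_X]$, this $\omega$ coincides with the one in the statement.

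For (ii): this is precisely the content of \cite{BGL}, already cited in the introduction: a compact K\"ahler variety with log terminal singularities and $c_1(K_X)=0$ admits a locally trivial algebraic approximation $\cX\to\bD$ with a sequence of projective fibers $X_{t_k}\to X$ as $t_k\to 0$. Hence Assumption~\ref{defor} is satisfied.

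With both ingredients in hand, Theorem~\ref{appl1} applies directly to $(X,\omega_X)$ with the metric $h$ constructed in (i), and its conclusion is exactly that $\omega|_{X^{\mathrm{orb}}}$ has orbifold singularities. There is no substantive obstacle here since the hard work has been done in Theorem~\ref{appl1}; the only minor point worth being explicit about is the existence of a flat metric on the $\Q$-line bundle $K_X$ under the hypothesis $c_1(K_X)=0$, which is standard (after passing to the index-one cover if needed, $K_X$ becomes a genuine holomorphically trivial line bundle and one pushes the trivial metric down).
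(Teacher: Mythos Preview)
Your proof is correct and follows essentially the same approach as the paper: the paper's own proof is a single sentence citing \cite[Theorem~B]{BGL} to verify Assumption~\ref{defor}, after which Theorem~\ref{appl1} applies. You have simply made explicit the (implicit) identification of the Ricci-flat metric with the solution of \eqref{MAF} for a flat metric $h$ on $K_X$; the only cosmetic point is that Theorem~\ref{appl1} is phrased with $h$ on $K_{\cX/\bD}$ rather than $K_X$, but extending a smooth metric from the central fiber is routine.
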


\begin{proof}
Indeed, thanks to \cite[Theorem B]{BGL}, we know that $X$ admits a locally trivial deformation with arbitrarily close algebraic fibers. 
\end{proof}

\subsection{Metric completion near the orbifold locus}

In this last paragraph, we would like to explain a relatively elementary application of Corollary~\ref{orbis} in terms of metric geometry. 

Let $X$ be an $n$-dimensional complex Kähler orbifold (i.e. a normal Kähler variety with at most quotient singularities), let $\omega$ be an orbifold Kähler metric on $X=X^{\rm orb}$ and let $\omega_X$ be a Kähler metric in the sense of Definition~\ref{def1}. For $a,b \in X_{\rm reg}$, define 
\[
d_\om(a,b) = \inf\{\text{length}_\omega(\gamma) \mid \gamma \text{ is a smooth curve in $X_{\rm reg}$ joining $a$ and $b$}\}.
\]
and define $d_X$ similarly. Both functions can be extended to $X\times X$ by considering paths which meet $X_{\rm sing}$ at most at their endpoints.

\begin{prop}\label{prop:orb_metric_completion}
Let $K$ be a compact subset in $X$ with non-empty, connected interior. Then, there exist $C>0$ and $\alpha\in (0,1)$ such that the following inequality holds on $K\times K$.
\[C^{-1}d_X \le d_\omega\le C d_X^{\alpha}.\]
In particular, the metric completion $\overline{(K \setminus X_{\rm sing}, d_\omega)}$ is homeomorphic to $K$.
\end{prop}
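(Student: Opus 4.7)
The plan is to establish both comparisons locally in orbifold uniformizing charts and then patch using compactness of $K$. Cover $K$ by finitely many orbifold charts $p_i : B_i \to U_i$ with $B_i$ a ball in $\C^n$ and $G_i \subset \mathrm{U}(n)$ the Galois group, arranged so that each $U_i$ admits a local analytic embedding $\iota_i : U_i \hookrightarrow \C^{N_i}$ into which $\omega_X$ extends as a Kähler form (cf.\ Definition~\ref{def1}). On $B_i$ the orbifold hypothesis on $\omega$ means $p_i^\star \omega$ is a genuine smooth Kähler metric, whereas $p_i^\star \omega_X = (\iota_i \circ p_i)^\star \omega_{\mathrm{amb}}$ is only smooth and semi-positive, being strictly positive exactly on $p_i^{-1}(U_i \cap X_{\rm reg})$.

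\emph{Lipschitz lower bound $C^{-1} d_X \leq d_\omega$.} On a shrunken ball $B_i' \Subset B_i$, smoothness of $p_i^\star \omega_X$ combined with strict positivity of the smooth form $p_i^\star \omega$ yields a pointwise comparison $p_i^\star \omega_X \leq c_i\, p_i^\star \omega$; descending to $U_i' := p_i(B_i')$ gives $\omega_X \leq c_i\, \omega$ on $U_i' \cap X_{\rm reg}$, so any smooth curve in that set has $\omega_X$-length bounded by $\sqrt{c_i}$ times its $\omega$-length. A Lebesgue-number argument paired with a chain of overlapping charts inside $K$ promotes this into the global estimate $d_X \leq C\, d_\omega$ on $K \times K$.

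\emph{Hölder upper bound $d_\omega \leq C\, d_X^\alpha$.} For $a, b$ in a common chart $U_i$ with lifts $\tilde a, \tilde b \in B_i$, smoothness of $p_i^\star \omega$ yields $d_\omega(a,b) \leq d_{p_i^\star \omega}(\tilde a, \tilde b) \leq C\, |\tilde a - \tilde b|$. The key input is a Łojasiewicz-type estimate: on $\bar B_i' \times \bar B_i'$, consider the continuous semi-algebraic functions
\[
F(\tilde a, \tilde b) := \min_{g \in G_i} |g \tilde a - \tilde b|, \qquad H(\tilde a, \tilde b) := |\iota_i \circ p_i(\tilde a) - \iota_i \circ p_i(\tilde b)|.
\]
Since $\iota_i$ is injective and $p_i$ is the Galois quotient, $F$ and $H$ share the same zero locus, namely the set of $G_i$-equivalent pairs. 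The semi-algebraic Łojasiewicz inequality therefore produces $\alpha \in (0,1)$ and $C > 0$ with $F \leq C\, H^\alpha$ on $\bar B_i' \times \bar B_i'$. Choosing $\tilde b$ which realizes $F(\tilde a, \tilde b)$ and using the bound $|\iota_i(a) - \iota_i(b)| \leq C\, d_X(a,b)$ (which holds because $\omega_X$ is comparable to the restriction to $X_{\rm reg}$ of the ambient Euclidean metric near $\iota_i(U_i)$), one obtains $d_\omega(a,b) \leq C\, d_X(a,b)^\alpha$ for $a, b$ in a common chart. For $a, b$ whose $d_X$-distance exceeds the Lebesgue number of the covering, the inequality is trivial from finiteness of $\mathrm{diam}_\omega(K)$, itself a consequence of the smoothness of each $p_i^\star \omega$.

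\emph{Metric completion.} The bi-Hölder comparison implies that the identity on $K \setminus X_{\rm sing}$ is uniformly continuous in both directions between $d_\omega$ and $d_X$, so the associated completions are canonically homeomorphic. To identify the $d_X$-completion of $K \setminus X_{\rm sing}$ with $K$, one uses the local embeddings $\iota_i$: since $X_{\rm sing}$ has codimension at least two in $X$, the subset $K \setminus X_{\rm sing}$ is $d_X$-dense in $K$, and the comparison with the Euclidean norm on $\C^{N_i}$ shows that $(K, d_X)$ is complete. The main obstacle is the Łojasiewicz step relating $F$ and $H$: verifying that both functions have the same zero set and invoking the semi-algebraic inequality with the correct exponent. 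Once this is in place, the chain-of-charts globalization and the completeness argument are routine.
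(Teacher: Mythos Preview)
Your argument is correct and follows the same overall architecture as the paper's: both inequalities are established locally on uniformizing charts and globalized by compactness, and the completion statement is deduced formally from the bi-H\"older comparison. The one substantive difference is in the H\"older upper bound. The paper chooses the local embedding $\iota_i$ to be the one given by a set of generators $f_1,\ldots,f_N$ of the invariant ring $\C[x_1,\ldots,x_n]^{G_i}$, and then proves the inequality $\min_{g\in G_i}\|g\tilde a-\tilde b\|\le C\|f(\tilde a)-f(\tilde b)\|^{1/r}$ directly via Hilbert's Nullstellensatz: each coordinate function $(g\tilde a)_k-\tilde b_k$ vanishes on the set $\{f(x)=f(y)\}$, so some fixed power lies in the ideal generated by the $f_j(x)-f_j(y)$, and one reads off an explicit exponent. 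You instead treat the pair $(F,H)$ as a black box and appeal to the \L ojasiewicz inequality. Both routes are valid; the paper's is more elementary and yields an explicit H\"older exponent, while yours is shorter but imports a deeper theorem. One small caveat: you call $F$ and $H$ \emph{semi-algebraic}, but this is only guaranteed once the embedding $\iota_i$ is chosen algebraically (e.g.\ via invariant polynomials, as the paper does); for an arbitrary local analytic embedding you should invoke the real-analytic version of \L ojasiewicz instead, which is equally available and was in fact the original setting of the inequality.
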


\begin{proof}
Let us first explain why the topological identification of the metric completion follows from the distance estimates. The identity map induces maps $F: (K \setminus X_{\rm sing}, d_\omega) \to (K,d_{\omega_X})$ (resp. $G: (K \setminus X_{\rm sing}, d_X) \to (K,d_{\omega})$) which are Lipschitz continuous (resp. Hölder continuous), hence extend uniquely to the metric completions in a continuous way. Moreover, $F$ and $G$ are inverses of each other, hence the same holds for their respective extension, showing the claim. 

Let us now establish the distance estimates. Since $\omega$ is an orbifold metric, it follows immediately that there is a constant $c_K > 0$ such that 
\begin{equation}\label{eq:strict_pos}
	\omega \geq c_K \omega_X
\end{equation} 
as currents on the interior of $K$. In particular, we get $d_\omega \ge c_K^{\frac 12} d_X$ on $K$, establishing the first inequality. To show the reverse inequality $d_\omega \le Cd_X^{\alpha}$, it is enough to work locally in a neighborhood $U$ of a given point $x_0\in K$ (or, to be more precise, we work on $U\times U \subset X\times X$). Moreover, one can assume without loss of generality that there is a uniformizing cover $p:V\to U$ where $V = B_{\C^n}(0,1) \subset \C^n$ and $G = {\rm Gal}(p)$ is a finite subgroup of $\mathrm{U}(n)$. Set $\widetilde{\omega} = p^\ast \omega$.

%

Pick a set $(f_j)_{j=1,\cdots,N}$ of polynomials generating $\C[x_1,\cdots,x_n]^G$. This induces an embedding $U \hookrightarrow \C^N$, fitting into the commutative diagram 
\[
\begin{tikzcd}
	V \ar[d,"p"']\ar[dr,"f"]& \\
	U \ar[r, hookrightarrow]& \C^N
\end{tikzcd}
\]
where $f = (f_1,\cdots,f_N)$.  
	
\smallskip
By the orbifold regularity of $\omega$, there is a $C>0$ such that $\widetilde{\omega} \leq C^2 \om_{\C^n}$ which implies that $d_{\widetilde{\omega}} \leq C d_{\omega_{\C^n}}$ on $V$. 
Since $\om_{\C^n}$ is $G$-invariant, it descend to a metric on $U_{\rm reg}$. 
We denote the corresponding induced distance by $d_{\rm flat}$. 
Up to shrinking $U$, we also obtain $d_\om \leq C d_{\rm flat}$ and $d_{\C^N} \leq C d_{\om_X}$ on $U$. 
For any $\bar{x},\bar{y} \in U_\reg$, let $x,y \in V$ be chosen lifts. Then one can express $d_{\rm flat}(\bar{x},\bar{y}) = \min_{g \in G} \|g\cdot x - y\|$. 
	
\smallskip
It remains to show there exist constants $C>0$ and $\alpha>0$ such that 
\[
	d_{\rm flat} \leq C d_{\rm \C^N}^\alpha
\] 
on $U$. 
Equivalently, it suffices to establish that
\begin{equation}\label{eq:loc_redu_1}
	\min_{g \in G} \|g\cdot x - y\| \leq C \|f(x)-f(y)\|^\alpha
\end{equation} 
on $V$ for some uniform $\alpha > 0$ and $C> 0$. 
One can observe that as $V$ is bounded, if the following holds
\begin{equation}\label{eq:loc_redu_2}
	\prod_{g \in G} \|g\cdot x - y\| \leq C \|f(x)-f(y)\|^\beta
\end{equation} 
on $V$, for some uniform $\beta > 0$ and $C > 0$, then \eqref{eq:loc_redu_1} holds with $\alpha=\beta/|G|$.  
	
\smallskip
For each $g \in G$ and $i = 1,\cdots,n$, consider the holomorphic function $h_{g,i}(x,y) := (g\cdot x)_i-y_i$. 
It vanishes on the set $(f(x)-f(y)=0) = V((f_j(x)-f_j(y))_{j=1,\cdots,N})$. 
By Hilbert's Nullstellensatz, there is an $r \in \N^\ast$ such that $h_{g,i}(x,y)^r = \sum_{j=1}^N a_j^{(g,i)} (f_j(x)-f_j(y))$. 
Taking the least common multiple of the exponents, we may assume that such a property holds for all $g$ and $i$ with the same $r$. 
It follows that 
\[
	|(g\cdot x)_i - y_i| \leq C \left(\sum_{j=1}^N |f_j(x) - f_j(y)| \right)^{1/r} 
	\leq C \left(N \sum_{j=1}^N |f_j(x) - f_j(y)|^2 \right)^{1/2r}. 
\] 
Consequently,
\[
	\prod_{g \in G} \|g\cdot x - y\| 
	= \prod_{g \in G} \left(\sum_{i=1}^n |(gx)_i - y_i|^2\right)^{1/2}
	\leq C' \|f(x)-f(y)\|^{|G|/r}
\]
which establishes \eqref{eq:loc_redu_2} and thus completes the proof. 
\end{proof}

Combining Corollary~\ref{bigcoro:orb_CY} with Proposition~\ref{prop:orb_metric_completion}, we obtain the following immediate consequence. 

\begin{cor}
\label{completion}
Let $(X,\omega_X)$ be a compact Kähler space with log terminal singularities and $c_1(X)=0$, and let $\omega$ be the unique singular Ricci-flat Kähler metric in $[\om_X]$. 
For any open set $U$ containing the non-orbifold locus $X \setminus X^{\rm orb}$, set $K := X \setminus U$. Then the metric completion $\overline{(K \setminus X_{\rm sing},\om)}$ is homeomorphic to $K$. 
\end{cor}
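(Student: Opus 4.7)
The plan is to reduce this immediately to the combination of Corollary~\ref{bigcoro:orb_CY} and Proposition~\ref{prop:orb_metric_completion}. The first step is to observe that, since $U$ is open and contains the non-orbifold locus $X \setminus X^{\rm orb}$, the compact set $K = X \setminus U$ is contained in the open euclidean set $X^{\rm orb}$. By Corollary~\ref{bigcoro:orb_CY}, the unique singular Ricci-flat Kähler metric $\omega \in [\omega_X]$ has orbifold singularities in restriction to $X^{\rm orb}$. In particular, $\omega|_{X^{\rm orb}}$ is a genuine orbifold Kähler metric in the sense of Proposition~\ref{prop:orb_metric_completion}, and $K$ sits inside the Kähler orbifold $X^{\rm orb}$ as a compact subset.

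Next, I would invoke Proposition~\ref{prop:orb_metric_completion} with the ambient Kähler orbifold taken to be $X^{\rm orb}$, the orbifold Kähler metric being $\omega|_{X^{\rm orb}}$, and the ambient Kähler form being $\omega_X|_{X^{\rm orb}}$. Although Proposition~\ref{prop:orb_metric_completion} is phrased in a global setting, its proof is entirely local: the two key inequalities $C^{-1} d_X \le d_\omega \le C d_X^\alpha$ on $K \times K$ are obtained from local comparisons in uniformizing charts covering $K$, together with the strict positivity estimate $\omega \ge c_K \omega_X$ on the interior of $K$. The latter follows from the compactness of $K \subset X^{\rm orb}$ combined with the fact that, in each local uniformizing cover $p:V \to U$ around points of $K$, both $p^*\omega$ and $p^*\omega_X$ are smooth Kähler metrics. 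Hence the comparison inequalities hold on $K \times K$.

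Once the distance estimates are established, the identification of the metric completion follows exactly as in the opening paragraph of the proof of Proposition~\ref{prop:orb_metric_completion}: the Lipschitz map $F:(K\setminus X_{\rm sing},d_\omega) \to (K, d_{\omega_X})$ and the Hölder map $G:(K \setminus X_{\rm sing},d_{\omega_X}) \to (K \setminus X_{\rm sing},d_\omega)$ extend continuously to the metric completions and are mutually inverse, yielding the homeomorphism with $K = \overline{(K\setminus X_{\rm sing}, d_{\omega_X})}$.

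I do not expect a real obstacle here: the result is genuinely a direct corollary of the two previously established statements. The only mildly delicate point is the hypothesis in Proposition~\ref{prop:orb_metric_completion} that the compact set have non-empty connected interior, which is not automatic for an arbitrary $U$. This is easily handled by applying the proposition to each of the finitely many connected components of $K$ having non-empty interior, and noting that the local distance estimates — being the heart of the argument — are insensitive to connectedness.
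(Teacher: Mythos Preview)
Your proposal is correct and follows exactly the approach indicated in the paper, which presents the corollary as an immediate consequence of Corollary~\ref{bigcoro:orb_CY} and Proposition~\ref{prop:orb_metric_completion} without further argument. Your added care about the non-empty connected interior hypothesis is a reasonable elaboration, and your observation that the underlying distance estimates are local and hence insensitive to connectedness is the right way to dispose of it.
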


\bibliographystyle{smfalpha}
\bibliography{biblio}

\end{document}